\documentclass[journal,twoside,web]{ieeecolor}
\usepackage{generic}
\usepackage{bm}   
\usepackage{amsmath,amssymb,amsfonts}
\allowdisplaybreaks[4]
\usepackage{indentfirst}  
\usepackage{graphicx}  
\usepackage{subfigure}
\usepackage{algorithm}
\usepackage{algpseudocode}
\usepackage{arydshln}
\usepackage{cite}  
\usepackage{color}
\usepackage[american]{babel}
\usepackage{microtype}
\usepackage{cases}
\usepackage{enumerate}
\usepackage{mathtools}
\usepackage{epstopdf}
\usepackage{hyperref}
\hypersetup{hidelinks=true}
\usepackage{textcomp}
\def\BibTeX{{\rm B\kern-.05em{\sc i\kern-.025em b}\kern-.08em
    T\kern-.1667em\lower.7ex\hbox{E}\kern-.125emX}}
\makeatletter
\@ifundefined{IEEEproof}{\newenvironment{IEEEproof}{\proof}{\endproof}}{}
\@ifundefined{ifCLASSOPTIONcaptionsoff}{\newif\ifCLASSOPTIONcaptionsoff\CLASSOPTIONcaptionsofffalse}{}

\newtheorem{theorem}{Theorem}
\newtheorem{assumption}{Assumption}

\newtheorem{lemma}{Lemma}
\newtheorem{definition}{Definition}
\newtheorem{remark}{Remark}

\def\mT{\mathcal{T}}

\def\mN{\mathcal{N}}

\def\mbR{\mathbb{R}}

\def\mD{\mathcal{D}}

\def\mS{\mathcal{S}}
\def\mI{\mathcal{I}}

\def\Nf0{\mN(f_i^0)}
\def\Nbmf0{\mN(\bm{f}^0)}
\def\hf{\hat{f}}
\def\hx{\hat{x}}

\begin{document}
%
\title{Robust Optimization Under Objective Functional Uncertainty}
\author{Yue~Song,~\IEEEmembership{Member,~IEEE,}
          ~Yuxi~Lu,~\IEEEmembership{Member,~IEEE,}
          ~Gang~Li,~\IEEEmembership{Member,~IEEE,}
          ~Kairui~Feng,~\IEEEmembership{Member,~IEEE,}
          ~Qi~Liu,~\IEEEmembership{Member,~IEEE}
\thanks{
The authors are with the State Key Laboratory of Autonomous Intelligent Unmanned Systems, Tongji University, Shanghai 201210, China,
and also with the Shanghai Research Institute for Intelligent Autonomous Systems, Tongji University, Shanghai 201210, China.}
}

\markboth{}  
{Song \MakeLowercase{\textit{et al.}}: Robust Optimization Under Objective Functional Uncertainty}

\maketitle


\begin{abstract}
This paper proposes a new robust optimization (RO) formulation namely the RO under objective functional uncertainty (ObRO).
The ObRO adopts a min-max structure where the inner problem finds the worst-case objective function in a continuous function space to maximize the cost, and the outer problem finds the optimal decision in a Euclidean space to minimize the cost.
A solution algorithm is designed to alternately generate the worst-case objective function at the current decision and the optimal decision for the current collection of objective functions. Using operator theory, we prove that this algorithm converges to the defined ``semi-global'' saddle point of the ObRO problem.
In addition, we propose a numerical solver based on the piece-wise linearization (PWL) approximation of objective functions. The PWL approximate problem is proved to be numerically consistent with the original ObRO problem.
The obtained results are applied to the degradation-aware battery charging scheduling in distribution networks.
\end{abstract}

\begin{IEEEkeywords}
   robust optimization, functional uncertainty, minimax optimization, saddle point, operator
\end{IEEEkeywords}

%
\IEEEpeerreviewmaketitle


\section{Introduction}\label{secintro}
Robust optimization (RO) serves as a fundamental decision-making tool for mitigating the impacts of ubiquitous uncertainties in the real world.
RO formulations have been evolved into a diverse family of variants with widespread engineering applications, such as state estimation \cite{ben2009robust}, robotics \cite{drnach2021robust}, traffic control \cite{ben2011robust}, power system dispatch \cite{sun2021robust}, and adversarial training \cite{sinha2018certifying}.
Representative RO models include the static single-stage RO that seeks a here-and-now decision robust to all possible realizations of uncertainty, and adaptive multi-stage RO that coordinates here-and-now decisions and recourse decisions in response to the actual realization of uncertainty \cite{zeng2013solving}.

\indent
Historically, the studies of RO have predominantly focused on uncertainties associated with the input parameters of optimization models.
To date, there are two types of RO formulations involving uncertainties embedded in the objective/constraint function expressions.
The first is distributionally robust optimization (DRO) where the objective/constraint functions contain undetermined coefficients governed by some probability distribution functions (PDFs) within an ambiguity set \cite{delage2010distributionally,zhao2018data,rahimian2022frameworks}.
DRO handles parametric uncertainty in the original problem and considers PDF-type functional uncertainty in the parameter space by leveraging statistical information of the uncertainties such as moment, divergence or optimal transport \cite{kuhn2025distributionally}.
The second stream is preference robust optimization (PRO) where the stakeholder's preference information is incomplete and its true utility function is drawn from a set of candidate functions \cite{maccheroni2002maxmin,armbruster2015decision}.
Restricted by economic principles, those candidate utility functions satisfy specific shape constraints such as concavity and monotonicity, which are leveraged in the PRO analysis  \cite{hu2017optimization,haskell2022preference}.
The functional uncertainty in PRO captures the partial unknown of a utility function as the stakeholder's behaviors are not entirely random.

\indent
Nevertheless, in some engineering problems the form of an actual function may be fully unknown due to the lack of knowledge about the underlying theory.
For instance, battery degradation is a critical issue in energy storage technologies, which is closely linked to the depth of discharge (DoD) \cite{ju2018two}.
As the electrochemical mechanism is still unclear, the relationship between battery degradation and DoD is obtained by fitting data points obtained in laboratory tests \cite{kong2024lithium,wang2026model}.
In addition, the discrepancy between laboratory tests and onsite operations of practical battery energy storage systems (BESS) introduces further noises to the degradation nature, by which we cannot presuppose any shape properties of the actual degradation function (see Fig. \ref{fig-objuncertain} for illustration).
With such an engineering requirement, it necessitates taking a step forward from the PRO and consider the RO under a full functional uncertainty.

\begin{figure}[!h]
  \centering
  \includegraphics[width=3.5in]{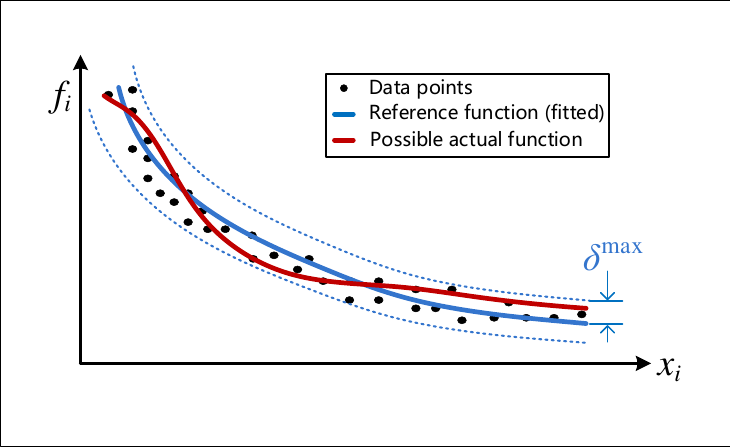}
  \caption{An example of functional uncertainty.}
  \label{fig-objuncertain}
\end{figure}

\indent
This paper investigates the RO where the objective function form is assumed completely unknown. The following threefold contributions are made.

1) We establish the minimax formulation of the robust optimization under objective functional uncertainty (ObRO), which finds the decision achieving the minimum cost under the worst-case objective function in a continuous function space (i.e., a neighborhood of a reference function).

2) Following the spirit of constraint generation approach, we design a function generation algorithm that alternately iterates between a subproblem and a master problem.
The subproblem generates the worst-case objective functions, while the master problem generates the optimal decisions. With the theory of monotonic operator, we prove that this algorithm converges to a fixed point which is a so-called semi-global saddle point of the ObRO problem.

3) Utilizing the piece-wise linearization (PWL) approximation of objective functions, we propose a numerical solver for the ObRO problem where the approximate subproblem is a linear program (LP) and the approximate master problem is a mixed-integer linear program (MILP). This PWL approximation is proved to be numerically consistent, i.e., the solution of the approximate problem approaches the solution of the original ObRO problem as PWL discretization becomes finer.
As an example application, we construct a degradation-aware BESS charging scheduling problem in distribution networks which is solved by the proposed algorithm.

\indent
The rest of the paper is organized as follows.
Section \ref{secformu} formulates the ObRO problem.
Section \ref{secalgorithm} proposes the solution algorithm with a convergence analysis.
Section \ref{secpwl} proposes the PWL numerical solver with a consistency analysis.
Section \ref{seccase} applies the method to the degradation-aware BESS charging scheduling problem.
Section \ref{secconclu} concludes the paper.

\section{Problem formulation}\label{secformu}
Let $C[a,b]$ be the space of continuous functions on interval $[a,b]$ equipped with the supremum norm
\begin{equation}
\begin{split}
      \|y\| = \max_{x\in[a,b]}|y(x)|,~\forall y\in C[a,b]
\end{split}
\end{equation}
which forms a Banach space.
Let $\bm{x}=[x_i]\in\mbR^n$ denote a vector of variables, where each variable $x_i$ takes value between $x_i^{\min}$ and $x_i^{\max}$.
With these notations, we propose the robust optimization under objective functional uncertainty (ObRO)
\begin{subequations}\label{ObRO}
\begin{align}
      &\min_{\bm{x}}\max_{f_i(\cdot)}~\sum\nolimits_{i=1}^n f_i(x_i)-\epsilon\cdot\mD_i(f_i) \label{ObRO-obj} \\
      &~~~~~~s.t.~~f_i \in\Nf0\subseteq C[x_i^{\min},x_i^{\max}],~i=1,...,n \label{ObRO-f} \\
      &~s.t.~\bm{x}\in\mS_x. \label{ObRO-x}
\end{align}
\end{subequations}
Each component of ObRO is explained in detail below.

\indent
1) Decision variable. It consists of an $n$-dimensional vector $\bm{x}$ and functions $ f_1(\cdot),f_2(\cdot),...,f_n(\cdot)$.
Hence, the ObRO problem involves optimizing the entire expression of the objective function rather than optimizing some parameters of a specific objective function.

2) Objective function. We assume the objective is separable as a sum of functions $f_i(x_i)$, which is common in engineering problems.
The form of $f_i$ is not known precisely and locates in a neighborhood of the reference function $f_i^0\in C[x_i^{\min},x_i^{\max}]$, say $\Nf0$, which will be explained later. 
In addition, $\mD_i$ is a functional of $f_i$
\begin{equation}\label{fstarmax}
\begin{split}
     \mD_i(f_i) = \int_{x_i^{\min}}^{x_i^{\max}}|f_i(x_i)-f_i^0(x_i)|~dx_i
\end{split}
\end{equation}
where $\epsilon>0$ is a small positive number.
The functional $\mD_i$ serves as a small penalty to regulate the total deviation of $f_i$ from the reference function $f_i^0$.

3) Constraint \eqref{ObRO-f}. It restricts $f_i(\cdot)$ to take expressions within $\Nf0$, which is a subset in the space of continuous functions, i.e., a neighborhood of the reference function $f_i^0$.
In this paper, for instance, we adopt the following description for $\Nf0$
\begin{subequations}\label{fneighbor}
\begin{align}
      &\Nf0 = \{f_i~\textup{satisfies} \notag \\
      &\|f_i - f_i^0\| \leq \delta_i^{\max} \label{ObRO-f0} \\
      &\mD_i(f_i) \leq d_i^{\max} \label{ObRO-totdev} \\
      &\Big|\frac{f_i(x_i^{a})-f_i(x_i^{b})}{f_i^0(x_i^{a})-f_i^0(x_i^{b})}\Big| \leq L_i,\forall x_i^{a},x_i^{b}\in[x_i^{\min},x_i^{\max}]~\}   \label{ObRO-lip}
\end{align}
\end{subequations}
where \eqref{ObRO-f0} implies that $f_i$ is in the ``$\delta_i^{\max}$-disk'' around $f_i^0$ in the sense of supremum norm;
\eqref{ObRO-totdev} sets an upper bound for the total deviation of $f_i$ from $f_i^0$;
\eqref{ObRO-lip} sets an upper bound for the rate of change of $f_i$ with $L_i>1$ being a predefined number according to the nature of the reference function $f_i^0$.
Constraints \eqref{ObRO-f0}-\eqref{ObRO-lip} ensure that $f_i$ deviates from $f_i^0$ in a realistic manner.

4) Constraint \eqref{ObRO-x}. It includes $x_i^{\min}\leq x_i\leq x_i^{\max}$, $i=1,...,n$ and possibly other restrictions on $\bm{x}\in\mbR^n$.

\indent
For convenience of analysis, we henceforth rewrite \eqref{ObRO} as
\begin{equation}\label{ObRO-compact}
\begin{split}
      \min_{\bm{x}\in\mS_x}\max_{\bm{f}\in\Nbmf0}~V(\bm{f},\bm{x})
\end{split}
\end{equation}
where $\bm{f}=[f_1(\cdot),...,f_n(\cdot)]^T$, and $V(\bm{f},\bm{x})$ and $\bm{f}\in\Nbmf0$ are the compact form of objective functional \eqref{ObRO-obj} and constraint \eqref{ObRO-f}, respectively.

\indent
The ObRO problem has a min-max structure.
Given a vector $\bm{x}$, the inner problem finds the worst-case objective function that leads to the maximum cost while not deviating from the reference function too much (regulated by the penalty $\mD_i(f_i)$).
The outer problem finds the optimal vector $\bm{x}$ that achieves the lowest cost under the worst-case uncertain objective function.
An ObRO optimum is a saddle point $(\bm{x}^*,\bm{f}^*)$ such that $\bm{x}^*$ is the minimizer of $\min_{\bm{x}\in\mS_x}~V(\bm{f}^*,\bm{x})$ and $\bm{f}^*$ is the maximizer of $\max_{\bm{f}\in\Nbmf0}~V(\bm{f},\bm{x}^*)$.
Due to the allowed non-convexity of $\bm{f}$, it is impractical to find the global saddle point.
Alternatively, it is of interest to introduce the definition below.
\begin{definition}[Semi-global saddle point]
    For the ObRO problem \eqref{ObRO-compact}, a solution $(\bm{x}^*,\bm{f}^*)$ is a semi-global saddle point if $\bm{x}^*$ is a local minimizer of $\min_{\bm{x}\in\mS_x}~V(\bm{f}^*,\bm{x})$ and $\bm{f}^*$ is a global maximizer of $\max_{\bm{f}\in\Nbmf0}~V(\bm{f},\bm{x}^*)$.
\end{definition}

\indent
A semi-global saddle point is stronger than a standard local saddle point where the outer minimization and inner maximization problems are both locally optimal.
A semi-global saddle point is more desirable in practice as it figures out the global worst-case objective and provides a locally optimal countermeasure at least.
This paper will focus on finding a semi-global saddle point of the ObRO problem.

\begin{remark}
  Due to the weak duality, it always holds
  \begin{equation*}
  \begin{split}
      \min_{\bm{x}\in\mS_x}\max_{\bm{f}\in\Nbmf0}~V(\bm{f},\bm{x})\geq \max_{\bm{f}\in\Nbmf0}\min_{\bm{x}\in\mS_x}~V(\bm{f},\bm{x}).
  \end{split}
  \end{equation*}
  But according to Sion's Minimax Theorem \cite{terkelsen1972some}, we cannot further achieve $\min\limits_{\bm{x}\in\mS_x}\max\limits_{\bm{f}\in\Nbmf0}~V(\bm{f},\bm{x})= \max\limits_{\bm{f}\in\Nbmf0}\min\limits_{\bm{x}\in\mS_x}~V(\bm{f},\bm{x})$ as $\bm{f}$ is not necessarily convex in our framework. It implies that in case of functional uncertainty, the here-and-now decision (given by the min-max problem) is more conservative than the wait-and-see decision (given by the max-min problem), the gap of which captures the cost of lacking mechanism knowledge about objective function.
\end{remark}

\begin{remark}
   There are two common approaches to solving min-max problems.
   The first approach is based on dual reformulation, i.e., to replace the inner maximization problem with its KKT condition \cite{dempe2015bilevel}, which works well when the inner problem is finite-dimensional and convex.
   The second approach is based on constraint generation, i.e., to replace the uncertainty set with a (usually finite) subset of ``worst-case realizations'' \cite{zeng2013solving,simchi2019constraint,song2025adaptive}, which works well under a linear objective function and a polyhedron-type uncertainty set.
   For the ObRO problem, the infinite-dimension and complex nature in the objective functional uncertainty necessitates a reinvestigation of the applicability and convergence of the conventional approaches.
   The next section will establish a solution method as an extension of constraint generation.
\end{remark}

\section{The solution method}\label{secalgorithm}
\subsection{The algorithm details}
For the convenience of analysis, \eqref{ObRO} is reformulated into the following equivalent form
\begin{subequations}\label{ObRO1}
\begin{align}
      \min_{\bm{x}\in\mS_x,\bm{f},\eta}~&\eta \\
      s.t.~~&\eta\geq V(\bm{f},\bm{x}),~\forall \bm{f}\in\Nbmf0  \label{ObRO1-eta}.
\end{align}
\end{subequations}
where the ObRO inner problem is transformed into the robust constraint \eqref{ObRO1-eta}.
Since \eqref{ObRO1-eta} contains an infinite number of scenarios for $\bm{f}$, it cannot be directly handled.
Inspired by the idea of constraint generation, we propose the function generation algorithm to address the difficulty in \eqref{ObRO1-eta}.
This algorithm relies on a subproblem and a master problem that generate a key subset of functions $\bm{f}$ and update solution $\bm{x}$ iteratively.

\indent
With the solution $\bm{x}^{k}$ at hand, we formulate the subproblem below to generate new functions $\bm{f}^{k+1}$
\begin{equation}\label{ObRO-sub}
\begin{split}
       \bm{f}^{k+1}=\arg\max\nolimits_{\bm{f}\in\Nbmf0}~&V(\bm{f},\bm{x}^k)
\end{split}
\end{equation}
Then, $\bm{f}^{k+1}$ are augmented into the objective function scenarios, and we formulate the master problem below to obtain an updated solution $\bm{x}^{k+1}$
\begin{equation}\label{ObRO-master}
\begin{split}
      (\bm{x}^{k+1},\eta^{k+1})=
      &\arg\min_{\bm{x}\in\mS_x,\eta}~\eta \\
      &s.t.~\eta\geq V(\bm{f}^l,\bm{x}),~l=0,1,...,k+1.
\end{split}
\end{equation}

\indent
With the above idea, we are ready to present the function generation algorithm in Algorithm~\ref{algObRO}.
In this algorithm, the subproblem \eqref{ObRO-sub} provides a new scenario of objective function $\bm{f}^{k+1}$ that is the worst case for the current solution $\bm{x}^{k}$. Meanwhile it estimates an upper bound $UB$ for the optimal cost of ObRO problem as $\bm{x}^{k}$ may be suboptimal.
Next, the master problem \eqref{ObRO-master} updates the solution $\bm{x}^{k+1}$ by considering a subset of objective function scenarios $\bm{f}^{l},l=0,1,...,k+1$ and estimates a lower bound $LB$ for the optimal cost of ObRO problem.
Also $\bm{x}^{k+1}$ is used to construct the subproblem in the next iteration.
The iteration converges in case of $UB-LB\leq \varepsilon$, implying that the current solution $\bm{x}^{k+1}$ is close enough to the very optimal one.

\indent
For finite-dimensional robust optimization problem, the convergence of constraint generation-type algorithms has been well studied \cite{lorca2016multistage,sun2021robust}.
Nevertheless, \eqref{ObRO-sub} is an infinite-dimensional functional optimization problem, where the convergence of Algorithm~\ref{algObRO} is non-trivial.
The next subsection will investigate the convergence property of Algorithm~\ref{algObRO} by means of operators.

\begin{algorithm}
\caption{}
\label{algObRO}
\begin{algorithmic}[1]
\Require{$\bm{f}^0,~\Nbmf0,~\mS_x$}
\Ensure{$\bm{x}^k,~\bm{f}^k$}
    \State Initialization: Iteration counter $k=0$. Set $\bm{f}^k=\bm{f}^0$, $UB=+\infty, LB=-\infty$.
    Solve the master problem \eqref{ObRO-master} and obtain the initial solution $\bm{x}^k$.
    \While{$UB-LB > \varepsilon$}
        \State \parbox[t]{\dimexpr\linewidth-\algorithmicindent}{
        Given $\bm{x}^{k}$, solve the subproblem \eqref{ObRO-sub} to obtain $\bm{f}^{k+1}$ and corresponding objective value $V(\bm{f}^{k+1},\bm{x}^k)$.}
        \State \parbox[t]{\dimexpr\linewidth-\algorithmicindent}{
        Update $UB\leftarrow \min\{UB,V(\bm{f}^{k+1},\bm{x}^k)\}$.}
        \State \parbox[t]{\dimexpr\linewidth-\algorithmicindent}{
        Solve the master problem \eqref{ObRO-master} to obtain $\bm{x}^{k+1}$ and corresponding objective value $\eta^{k+1}$.}
        \State Update $LB\leftarrow \eta^{k+1}$.
        \State $k\leftarrow k+1$.
    \EndWhile
\end{algorithmic}
\end{algorithm}

\subsection{Convergence analysis}
We begin the convergence analysis by regarding optimization problems \eqref{ObRO-sub} and \eqref{ObRO-master} as operators.
In fact, the subproblem \eqref{ObRO-sub} serves as an operator $\mT_1$ mapping from $\mbR^n$ to the space of continuous functions, i.e., $\bm{f}^{k+1}=\mT_1\bm{x}^k$.
Solving the master problem \eqref{ObRO-master} can be regarded as an operator $\mT_2$ mapping from the space of continuous functions to $\mbR^n$.
The composite operator $\mT_2\circ\mT_1$, which completes one round of iteration in Algorithm~\ref{algObRO}, maps from $\mbR^n$ to $\mbR^n$, i.e., $\bm{x}^{k+1}=\mT_2\circ\mT_1\bm{x}^k$.
Thus, we introduce the following concepts and assumptions on operators for our analysis.
\begin{definition}[\cite{meyer1976sufficient}]
   Consider an operator $\mT: Y_1\rightarrow Y_2$ where $Y_1, Y_2$ are topological vector spaces, then:
   \begin{itemize}
     \item $\mT$ is \textit{continuous} if for any sequence $\{y_i\}\subseteq Y_1$, $\lim_{i\rightarrow\infty} y_i=y$ implies $\lim_{i\rightarrow\infty} \mT(y_i)=\mT(y)$.
     \item When $Y_1=Y_2$, $y\in Y_1$ is a \textit{fixed point} of $\mT$ if $y=\mT(y)$.
     \item When $Y_1=Y_2$, $\mT$ is \textit{strictly monotonic} with respect to the functional $g$ if $g(\mT(y))<g(y)$ whenever $y$ is not a fixed point of $\mT$.
     \item When $Y_1=Y_2$, $\mT$ is \textit{uniformly compact} if there exists a compact set $Y_1^{\prime}\subseteq Y_1$ independent of $y$ such that $\mT(y)\in Y_1^{\prime}$ for all $y\in Y_1$.
   \end{itemize}
\end{definition}

\begin{assumption}\label{assump-unique}
   The set $\mS_x$ is closed. For any $\bm{x}^{k}\in\mS_x$, the subproblem~\eqref{ObRO-sub} has a unique globally optimum.
   For any functions $\bm{f}\in\Nbmf0$, the master problem~\eqref{ObRO-master} has a unique globally optimum.
\end{assumption}

\begin{assumption}\label{assump-bounded}
   There exists a positive number $M<\infty$ such that each reference function $f_i^0$, $i=1,...,n$ satisfies $\|f_i^0(x_i^a)\|<M$ and $\frac{|f_i^0(x_i^{a})-f_i^0(x_i^{b})|}{|x_i^{a}-x_i^{b}|}<M$ for any $x_i^{a},x_i^{b}\in[x_i^{\min},x_i^{\max}]$.
\end{assumption}

\begin{assumption}\label{assump-fixed}
   The operator $\mT_2\circ\mT_1$ has a finite number of fixed points.
\end{assumption}

\begin{remark}
   Assumption \ref{assump-unique} can be justified as follows.
   If the penalty $\mD_i(f_i)$ is absent from the objective, then \eqref{ObRO-sub} may have multiple optimal solutions as its objective $f_i(x_i)$ only focuses on the function value around $x_i$ and there is much freedom for the function expression in other regions.
   When $\mD_i(f_i)$ is involved, the objective of \eqref{ObRO-sub} contains ``global'' information of $f_i$ and attempts to maximize $\sum\nolimits_{i=1}^n f_i(x_i)$ at a low budget of total deviation of $f_i$ from $f_i^0$. In this case, it is more reasonable to assume that the optimum is unique.
   The master problem \eqref{ObRO-master} is a nonlinear program, and it is a common practice to assume the uniqueness of its global optimum.
   Assumption \ref{assump-bounded} requires the reference function to be bounded and Lipschitz continuous, which is practical.
   For Assumption \ref{assump-fixed}, it will be seen later that a fixed point of $\mT_2\circ\mT_1$ is also a semi-global saddle point of the ObRO problem (Theorem \ref{thm-fixed}), which means the number of fixed points is no greater than the number of saddle points.
   So the adoption of Assumption~\ref{assump-unique} justifies Assumption \ref{assump-fixed}.
\end{remark}

With these assumptions, the optimality and convergence of Algorithm~\ref{algObRO} are characterized as follows.

\begin{theorem}[Optimality of fixed point]\label{thm-fixed}
    Under Assumption \ref{assump-unique}, if $\bm{x}^*$ is a fixed point of the operator $\mT_2\circ\mT_1$ and $\bm{f}^*=\arg\max\nolimits_{\bm{f}\in\Nbmf0}V(\bm{f},\bm{x}^*)$,
    then $(\bm{x}^*,\bm{f}^*)$ is a semi-global saddle point of the ObRO problem \eqref{ObRO}.
\end{theorem}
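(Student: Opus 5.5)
The global-maximizer half of the definition holds by construction: $\bm{f}^*=\arg\max_{\bm{f}\in\Nbmf0}V(\bm{f},\bm{x}^*)$, and this maximizer exists and is unique by Assumption~\ref{assump-unique}. So $\bm{f}^*$ is a global maximizer of the inner problem at $\bm{x}^*$. What remains is to show that $\bm{x}^*$ is a (local) minimizer of $\min_{\bm{x}\in\mS_x}V(\bm{f}^*,\bm{x})$. I would in fact aim at the stronger statement that $\bm{x}^*$ minimizes the worst-case function $\Phi(\bm{x}):=\max_{\bm{f}\in\Nbmf0}V(\bm{f},\bm{x})$ over the relevant set, and then pass from $\Phi$ to $V(\bm{f}^*,\cdot)$.

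First I would unpack the fixed-point relation. Since $\bm{x}^*=\mT_2\circ\mT_1\bm{x}^*$, and $\mT_1\bm{x}^*=\bm{f}^*$, the point $\bm{x}^*$ is the unique global optimum of the master problem \eqref{ObRO-master} whose scenario set contains $\bm{f}^*$. That is, $\bm{x}^*$ minimizes $\max_{l}V(\bm{f}^l,\bm{x})$ over $\mS_x$ for a finite collection $\{\bm{f}^l\}\ni\bm{f}^*$. Here I treat $\mT_2$ as the map that returns the master solution once the new function has been appended to the current collection. Let $\eta^*$ denote the master value at $\bm{x}^*$.

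Next I would compare the two bounds. Because $\bm{f}^*$ is the global worst case at $\bm{x}^*$, we have $\Phi(\bm{x}^*)=V(\bm{f}^*,\bm{x}^*)\le\max_l V(\bm{f}^l,\bm{x}^*)=\eta^*$. On the other hand, for every $\bm{x}\in\mS_x$, $\eta^*\le\max_l V(\bm{f}^l,\bm{x})\le\Phi(\bm{x})$, since each $\bm{f}^l\in\Nbmf0$. Hence $\Phi(\bm{x}^*)\le\Phi(\bm{x})$, so $\bm{x}^*$ globally solves the outer problem. This also gives $UB=LB$ at a fixed point, with $\eta^*=V(\bm{f}^*,\bm{x}^*)$. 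Therefore the constraint with $l=*$ is active at $\bm{x}^*$.

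The main obstacle is passing from optimality of the max-function to local optimality of $V(\bm{f}^*,\cdot)$. In general $\min_{\bm{x}}\max_l V(\bm{f}^l,\bm{x})$ can be attained at a kink where two scenarios are active, and there $V(\bm{f}^*,\cdot)$ alone may still decrease. My first attempt is to use the fixed-point structure once more. Apply the algorithm with the collection reduced to $\{\bm{f}^*\}$, which is the scenario the subproblem regenerates at $\bm{x}^*$. Suppose some nearby $\bm{x}$ satisfies $V(\bm{f}^*,\bm{x})<V(\bm{f}^*,\bm{x}^*)$. Uniqueness of the master optimum in Assumption~\ref{assump-unique}, applied to the master problem with function set $\{\bm{f}^*\}$ (``for any functions $\bm{f}$''), would then force $\mT_2\bm{f}^*\neq\bm{x}^*$, contradicting $\bm{x}^*=\mT_2(\mT_1\bm{x}^*)$.

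This route requires reading $\mT_2$ as depending only on the newly generated function, which is how the operator notation $\mT_2:\,$functions$\to\mbR^n$ presents it. Under that reading $\bm{x}^*$ is in fact a global, hence local, minimizer of $V(\bm{f}^*,\cdot)$ over $\mS_x$. Closedness of $\mS_x$ ensures that the minimizers are attained within $\mS_x$. If only the cumulative reading is admissible, I would fall back on a local argument near $\bm{x}^*$. I would use continuity of $V$ in $\bm{x}$, which holds by the Lipschitz bounds \eqref{ObRO-lip} and Assumption~\ref{assump-bounded}, to argue that only $\bm{f}^*$ is active in a neighborhood of $\bm{x}^*$. That would give local minimality of $V(\bm{f}^*,\cdot)$ and conclude the proof.
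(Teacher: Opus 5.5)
Your first route does not prove the theorem for Algorithm~\ref{algObRO}. The master problem \eqref{ObRO-master} keeps every previously generated scenario $\bm{f}^l$, $l=0,\dots,k+1$, and both the paper's proof and Lemma~\ref{lem-monotonic} use this cumulative master. Reading $\mT_2$ as $\bm{f}\mapsto\arg\min_{\bm{x}\in\mS_x}V(\bm{f},\bm{x})$ replaces the algorithm by a pure best-response iteration and makes the statement trivial. It would even give global minimality of $V(\bm{f}^*,\cdot)$, which the paper deliberately does not claim. Under the cumulative reading, which you adopt yourself in your second paragraph, the fixed-point property says nothing about the master problem over $\{\bm{f}^*\}$ alone. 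A nearby $\bm{x}$ with $V(\bm{f}^*,\bm{x})<V(\bm{f}^*,\bm{x}^*)$ then contradicts nothing unless you know that no other scenario overtakes $V(\bm{f}^*,\cdot)$ near $\bm{x}^*$. That is exactly the kink issue you flagged. Your observation that $\bm{x}^*$ globally minimizes $\Phi$ (so $UB=LB$ at a fixed point) is correct and a nice addition. It does not help here, though, because $\Phi\geq V(\bm{f}^*,\cdot)$ gives the inequality in the wrong direction.

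Your fallback follows the paper's route, but the step you hand to continuity cannot come from continuity. You first need strict inactivity at $\bm{x}^*$ itself: $V(\bm{f}^l,\bm{x}^*)<V(\bm{f}^*,\bm{x}^*)$ for every scenario $\bm{f}^l\neq\bm{f}^*$ in the collection. This follows from uniqueness of the subproblem maximizer in Assumption~\ref{assump-unique}. Each $\bm{f}^l$ lies in $\mN(\bm{f}^0)$, so a tie would make $\bm{f}^l$ a second global maximizer at $\bm{x}^*$. This is precisely what rules out the kink.

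Only after that does continuity of each $V(\bm{f}^l,\cdot)$ give a neighborhood $\mN_{\varepsilon}(\bm{x}^*)$ on which $\max_l V(\bm{f}^l,\cdot)=V(\bm{f}^*,\cdot)$; duplicates $\bm{f}^l=\bm{f}^*$ are harmless. That continuity is immediate from $f_i\in C[x_i^{\min},x_i^{\max}]$; Assumption~\ref{assump-bounded} is not needed and is not a hypothesis of the theorem. Since $\bm{x}^*$ minimizes $\max_l V(\bm{f}^l,\cdot)$ over $\mS_x$, you get $V(\bm{f}^*,\bm{x}^*)\leq V(\bm{f}^*,\bm{x})$ for all $\bm{x}\in\mN_{\varepsilon}(\bm{x}^*)\cap\mS_x$. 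That is the required local minimality.
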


\begin{IEEEproof}
    According to the preconditions, $\bm{f}^*$ is the unique global maximizer of $\max\nolimits_{\bm{f}\in\Nbmf0}V(\bm{f},\bm{x}^*)$.
    Now we prove that $\bm{x}^*$ must be a local minimizer of $\min\nolimits_{\bm{x}\in\mS_x}V(\bm{f}^*,\bm{x})$ by contradiction.

    Suppose we have $\bm{x}^{k}=\bm{x}^*$ at hand, then the next generated function is $\bm{f}^{k+1}=\mT_1\bm{x}^k=\bm{f}^*$.
    If $\bm{x}^*$ is a fixed point of $\mT_2\circ\mT_1$, then $\bm{x}^*$ is the unique optimum of the master problem below
    \begin{subequations}\label{masterfstar}
    \begin{align}
      (\bm{x}^*,\eta^*)=\arg\min_{\bm{x}\in\mS_x,\eta}~&\eta \\
      s.t.~&\eta\geq V(\bm{f}^l,\bm{x}),~l=0,1,...,k \\
      &\eta\geq V(\bm{f}^*,\bm{x}).
    \end{align}
    \end{subequations}
    Note that we have $V(\bm{f}^*,\bm{x}^*) > V(\bm{f}^l,\bm{x}^*),~l=0,1,...,k$ since $\bm{f}^*$ is the unique global maximizer.
    By the sign preserving property, there exists $\varepsilon>0$ such that
     \begin{equation}\label{fstarmax}
     \begin{split}
            V(\bm{f}^*,\bm{x}) > V(\bm{f}^l,\bm{x}),~l=0,1,...,k,~\forall \bm{x}\in\mN_{\varepsilon}(\bm{x}^*)
     \end{split}
     \end{equation}
    where $\mN_{\varepsilon}(\bm{x}^*)=\{\bm{x}:~\|\bm{x}-\bm{x}^*\|< \varepsilon\}$.
    In this case, if we assume $\bm{x}^*$ is not a local minimizer of $\min\nolimits_{\bm{x}\in\mS_x}V(\bm{f}^*,\bm{x})$, then there exists $\bm{x}^{\prime}\in\mN_{\varepsilon}(\bm{x}^*)\cap\mS_x$, $\bm{x}^{\prime}\neq \bm{x}^*$ such that
    \begin{equation}\label{fstarmax2}
     \begin{split}
       V(f^{*},\bm{x}^{\prime})\leq V(f^{*},\bm{x}^*).
     \end{split}
     \end{equation}
    By \eqref{fstarmax} and \eqref{fstarmax2}, the objective value of \eqref{masterfstar} at $\bm{x}^{\prime}$ equals $V(\bm{f}^*,\bm{x}^{\prime})\leq V(\bm{f}^*,\bm{x}^*)$, which contradicts to that $\bm{x}^*$ is the unique optimum of \eqref{masterfstar}.
    Therefore, $(\bm{x}^*, \bm{f}^*)$ is a semi-global saddle point.
\end{IEEEproof}

\begin{theorem}[Convergence to local optimum]\label{thm-converge}
   Under Assumptions \ref{assump-unique}-\ref{assump-fixed}, Algorithm~\ref{algObRO} converges to a fixed point of $\mT_2\circ\mT_1$ that is a semi-global saddle point of the ObRO problem \eqref{ObRO}.
\end{theorem}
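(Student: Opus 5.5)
The proof will rest on Meyer's global convergence theorem for iterative maps \cite{meyer1976sufficient}. That theorem says the following. Suppose an operator $\mT:Y\to Y$ is continuous, uniformly compact, and strictly monotonic with respect to some continuous functional $g$, and suppose it has finitely many fixed points. Then the iteration $y^{k+1}=\mT y^k$ converges to one of its fixed points. I will apply this with $\mT=\mT_2\circ\mT_1$ on $Y=\mS_x$, which is well defined by Assumption~\ref{assump-unique}, and then invoke Theorem~\ref{thm-fixed} to conclude that the limit is a semi-global saddle point. The work therefore splits into four verifications: continuity, uniform compactness, strict monotonicity, and finiteness of fixed points. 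The last is simply Assumption~\ref{assump-fixed}.

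\textbf{Uniform compactness.} Since $x_i^{\min}\le x_i\le x_i^{\max}$ is part of $\mS_x$ and $\mS_x$ is closed, $\mS_x$ is compact in $\mbR^n$. Because $\mT_2\circ\mT_1$ maps into $\mS_x$, we can take $Y_1'=\mS_x$.

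\textbf{Continuity.} I would argue $\mT_1$ and $\mT_2$ separately using Berge's maximum theorem. For $\mT_1$, the feasible set $\Nbmf0$ does not depend on $\bm{x}$. Constraint \eqref{ObRO-lip} together with Assumption~\ref{assump-bounded} gives a uniform Lipschitz bound $L_iM$ on every $f_i\in\Nf0$, and \eqref{ObRO-f0} gives uniform boundedness. By Arzel\`a--Ascoli, $\Nbmf0$ is therefore compact in $C$ with the supremum norm; it is also closed, since all constraints are preserved under uniform limits. The objective $V(\bm{f},\bm{x})$ is jointly continuous: $|f_i(x_i)-f_i'(x_i')|\le\|f_i-f_i'\|+L_iM|x_i-x_i'|$, and $\mD_i$ is continuous in the norm. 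Berge's theorem, combined with uniqueness from Assumption~\ref{assump-unique}, then makes the argmax map $\mT_1$ single-valued and continuous. For $\mT_2$, the master problem is $\min_{\bm{x}\in\mS_x}\max_l V(\bm{f}^l,\bm{x})$, whose objective depends continuously on the newly added function $\bm{f}$ (uniformly in $\bm{x}$) over the compact set $\mS_x$. Berge's theorem with uniqueness again gives continuity. A subtlety is that $\mT_2$ also depends on the accumulated scenario set $\bm{f}^0,\dots,\bm{f}^k$. I would treat this by viewing the state as the pair (current $\bm{x}$, current scenario set), or equivalently by fixing the history, and I would note this as a modeling convention.

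\textbf{Strict monotonicity.} This is the main obstacle. The natural candidate is $g(\bm{x})=\max_{\bm{f}\in\Nbmf0}V(\bm{f},\bm{x})$, the worst-case cost, which is continuous by Berge's theorem. The master values $\eta^k$ are nondecreasing, because the scenario set only grows, and they are bounded above by the ObRO optimum. The subproblem values $g(\bm{x}^k)$ give the upper bounds. If $\bm{x}^{k+1}=\bm{x}^k$, the point is a fixed point. Otherwise, since $\bm{f}^{k+1}$ has been added, $\eta^{k+1}\ge V(\bm{f}^{k+1},\bm{x}^{k+1})$. I would try to show $g(\bm{x}^{k+1})<g(\bm{x}^k)$ by using uniqueness of the master optimum and the fact that $\bm{x}^k$ remains feasible for the enlarged master problem with value $g(\bm{x}^k)$. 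If that comparison fails to be strict, my fallback is to run Meyer's argument on the gap $UB-LB$. That gap is monotone nonincreasing and bounded. By compactness, every cluster point of $\{\bm{x}^k\}$ must then satisfy $\mT_2\circ\mT_1\bm{x}=\bm{x}$: at any non-fixed cluster point, continuity would force a uniform positive decrease of the gap, which is impossible for a bounded monotone sequence.

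\textbf{Conclusion.} Finitely many fixed points (Assumption~\ref{assump-fixed}), combined with $\|\bm{x}^{k+1}-\bm{x}^k\|\to0$, which follows from continuity at cluster points, implies that the whole sequence converges to a single fixed point $\bm{x}^*$. Theorem~\ref{thm-fixed} then shows that $(\bm{x}^*,\mT_1\bm{x}^*)$ is a semi-global saddle point.
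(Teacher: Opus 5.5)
Your plan has the same architecture as the paper's Appendix~A: continuity and uniform compactness, then strict monotonicity, then Meyer's lemma, then Theorem~\ref{thm-fixed}. Your Berge argument is a cleaner version of Lemma~\ref{lem-conti}. However, the monotonicity step is never established.

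\textbf{The functional $g$ fails.} The functional $g(\bm{x})=\max_{\bm{f}\in\mN(\bm{f}^0)}V(\bm{f},\bm{x})$ cannot serve. Feasibility of $\bm{x}^k$ in the enlarged master problem only gives $\eta^{k+1}\le g(\bm{x}^k)$, i.e.\ $LB\le UB$. Moreover, $g$ really can increase at a non-fixed iterate. Take $n=1$, $\mS_x=[0,1]$, $f^0(x)=x$, $0<\epsilon<1$ and $0<\delta^{\max}\le 1\le d^{\max}$. Then \eqref{ObRO-lip} just says $f$ is $L$-Lipschitz. The unique worst case at any $x$ is $f^0$ plus a tent of height $\delta^{\max}$ peaked at $x$, with slopes $L$ and $-L$ of $f$ on either side. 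Hence $g$ is strictly increasing, and the initial master solution $x^0=0$ is its unique minimizer. Yet $x^1=\mT_2\circ\mT_1 x^0>0$, so $g(x^1)>g(x^0)$.

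\textbf{The fallback assumes what must be proved.} Your fallback presupposes exactly what the paper proves in Lemma~\ref{lem-monotonic}. If $\bm{x}^{k+1}\neq\bm{x}^k$, uniqueness of the iteration-$k$ master optimum gives $\eta^{k+1}\ge\max_{l\le k}V(\bm{f}^l,\bm{x}^{k+1})>\eta^k$. So $LB$ rises strictly, while the $\min$ update keeps $UB$ from rising. You do not supply this argument.

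\textbf{The history dependence is load-bearing.} The deeper problem is the ``subtlety'' you set aside. Three of your steps need a single continuous self-map of $\mS_x$ and a functional of $\bm{x}$ alone:
\begin{itemize}
\item Meyer's lemma itself;
\item your claim that a non-fixed cluster point forces a uniform decrease of the gap;
\item the step $\|\bm{x}^{k+1}-\bm{x}^k\|\to0$.
\end{itemize}
In Algorithm~\ref{algObRO}, the map producing $\bm{x}^{k+1}$ depends on $\bm{f}^0,\dots,\bm{f}^k$, and $UB-LB$ depends on the whole history. So there is no single map whose continuity at a cluster point you could invoke.

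Lifting the state to (point, scenario set) does not repair this, for two reasons. First, Assumption~\ref{assump-fixed} concerns fixed points of $\mT_2\circ\mT_1$, not of the lifted map. Second, in a run that never stalls, the scenario set is infinite and $\bm{f}^{k+1}=\mT_1\bm{x}^k$ accumulates at $\bm{f}^*$. The sign-preservation step in the proof of Theorem~\ref{thm-fixed} needs only finitely many other scenarios, all strictly below $V(\bm{f}^*,\bm{x}^*)$, so it is unavailable.

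\textbf{What the iteration actually gives.} The cutting-plane structure, with global master solves and the equi-Lipschitz bound from Lemma~\ref{lem-conti}, gives $g(\bm{x}^k)\le\eta^j+C\|\bm{x}^k-\bm{x}^j\|$ for $j>k$. Hence $UB-LB\to0$ and every cluster point minimizes $g$. That is a different property from the one Theorem~\ref{thm-fixed} needs. In the instance above, $x=0$ minimizes $g$, yet it is a strict local maximizer of $V(f^*,\cdot)$, because the worst case peaks where it is generated.

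The paper's Appendix~A applies Meyer's lemma to $\mT_2\circ\mT_1$ in the same fixed-map way, so following it will not close this gap.
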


\begin{IEEEproof}
   It can be proved by showing that $\mT_2\circ\mT_1$ is continuous, uniformly compact, and strictly monotonic with respect to $UB-LB$. The details are given in Appendix A.
\end{IEEEproof}

\indent
Theorem \ref{thm-fixed} and Theorem \ref{thm-converge} conclude the convergence of Algorithm~\ref{algObRO} to a fixed point of $\mT_2\circ\mT_1$ which is also a semi-global saddle point of the ObRO problem.
This theoretical result is good enough for practice as it is hard to guarantee the convergence to the global saddle point, which is rooted from that $\bm{f}$ is allowed to be non-convex.
In this case, a local minimizer is not equivalent to a global minimizer, and $\bm{x}^*$ not being a global minimizer of $\min\nolimits_{\bm{x}\in\mS_x}V(\bm{f}^*,\bm{x})$ does not lead to \eqref{fstarmax2} which is the key to constructing a contradiction in the proof of Theorem \ref{thm-fixed}.

\indent
Since the master problem \eqref{ObRO-master} and subproblem \eqref{ObRO-sub} in Algorithm~\ref{algObRO} usually cannot be solved analytically, the next section will design a numerical solver for them.

\section{PWL numerical solver}\label{secpwl}
The master problem \eqref{ObRO-master} is a nonlinear program with respect to the $n$-dimensional decision variable $\bm{x}$, while the subproblem \eqref{ObRO-sub} is an infinite-dimensional optimization problem with respect to the objective function $f_i$.
In general, the difficulty in \eqref{ObRO-master} stems from the nonlinearity of $f_i$ acting as an input, and the challenge in \eqref{ObRO-sub} arises from the infinite dimensionality of $f_i$ acting as a decision variable.
Therefore, the core of the numerical solver is to work out a proper finite-dimensional approximation of $f_i$ to mitigate the intractability in \eqref{ObRO-sub} and \eqref{ObRO-master}.
In this section, we propose to handle such difficulty by PWL approximation.
In addition, the property of PWL approximation also helps to establish the consistency between the original problems and corresponding approximate formulations.

\indent
To focus on handling $f_i$, in this section we assume a simple form of $\mS_x$ as a linear constraint
\begin{equation}\label{linearSx}
\begin{split}
      \mS_x = \{\bm{x}|~\bm{A}\bm{x}\leq \bm{b}\}
\end{split}
\end{equation}
where $\bm{A}\in\mbR^{m\times n}$ and $\bm{b}\in\mbR^m$.
It will be seen later that we finally obtain a tractable MILP/LP formulation by applying PWL approximation to the ObRO master/sub problem with a linear form of $\mS_x$.
The linear form of $\mS_x$ in \eqref{linearSx} can be extended to convex nonlinear versions, such as the second-order cone constraint which is common in engineering problems (e.g., see \cite{gan2015exact,kocuk2016strong,fooladivanda2018energy,zheng2019online,singh2019natural} as examples in various energy systems).
In that case, we will obtain a second-order conic program (SOCP) or mixed-integer SOCP form of the ObRO master/sub problem which is also tractable.

\indent
We now present the PWL approximation of $f_i(x_i)$.
First, the interval $[x_i^{\min},x_i^{\max}]$ is partitioned into $N-1$ segments
\begin{equation}
\begin{split}
      &\mI_N = \{[\hx_{i[1]},\hx_{i[2]}],[\hx_{i[2]},\hx_{i[3]}],\cdots,[\hx_{i[N-1]},\hx_{i[N]}]\} \\
      &\hx_{i[1]} = x_i^{\min},~\hx_{i[N]}=x_i^{\max}
\end{split}
\end{equation}
where $\hx_{i[p]}$ denotes the $p$-th sampling point. The partition is not necessarily an even one, and the size of each segment can be tailored to the shape of the reference function $f_i^0$.

\indent
For simplicity, we henceforth use $\{\hx_{i[p]}\}_{p=1}^N$ to denote the whole set of sampling points.
Accordingly, let $\{\hf_{i[p]}\}_{p=1}^N$ denote the values of $f_i(\cdot)$ at the sampling points $\{\hx_{i[p]}\}_{p=1}^N$, i.e., $\hf_{i[p]}=f_i(\hx_{i[p]})$.
By introducing $N$-1 binary variables $\{\beta_{i[p]}\}_{p=1}^{N-1}$ and $N$ continuous variables $\{\alpha_{i[p]}\}_{p=1}^N$, we have the following PWL approximation of $f_i$ 
\begin{subequations}\label{fihat}
\begin{align}
      \beta_{i[p]} &= \{0,1\},~p=1,...,N-1 \label{fihat-beta} \\
      0 &\leq \alpha_{i[p]} \leq 1,~p=1,2,...,N \\
      \sum\nolimits_{p=1}^{N-1} \beta_{i[p]} &= 1,~\sum\nolimits_{p=1}^{N} \alpha_{i[p]} = 1 \\
      \alpha_{i[p]} &\leq \beta_{i[p]} + \beta_{i[p-1]},~p=2,...,N-1 \\
      \alpha_{i[1]} &\leq \beta_{i[1]},~\alpha_{i[N]} \leq \beta_{i[N-1]} \label{fihat-alpha} \\
      \hx_i &= \sum\nolimits_{p=1}^{N} \alpha_{i[p]} \hx_{i[p]} \label{fihat-xi} \\
      \hf_{i}(\hx_i) &= \sum\nolimits_{p=1}^{N} \alpha_{i[p]} \hf_{i[p]}. \label{fihat-fi}
\end{align}
\end{subequations}
When $\beta_{i[p]}=1$, the $p$-th segment of the function is activated and \eqref{fihat-beta}-\eqref{fihat-alpha} impose that $\alpha_{i[p]},\alpha_{i[p+1]}$ are the only two possibly nonzero variables among $\{\alpha_{i[p]}\}_{p=1}^N$.
Then, for $\hx_i$ given by \eqref{fihat-xi}, the expression \eqref{fihat-fi} estimates the function value at $\hx_i$ by the linear interpolation between $\hf_{i[p]}$ and $\hf_{i[p+1]}$ (see Fig. \ref{fig-PWL} for illustration).
To sum up, using PWL approximation, the whole function $f_i(\cdot)$ is captured by a set of points $\{\hf_{i[p]}\}_{p=1}^N$ and the values of $x_i$ and $f_i(x_i)$ are estimated by the hatted notations $\hx_i, \hf_i(\hx_i)$ in \eqref{fihat}.

\indent
We now apply the PWL approximation to reformulating the subproblem \eqref{ObRO-sub}.
Assume we have $\hx_i^{k}$ at hand, which is the solution of the master problem in iteration $k$, such that $\hx_{i[p^k]} \leq \hx_i^{k}\leq \hx_{i[p^k+1]}$, i.e., $\hx_i^{k}$ locates in the $p^k$-th segment.
Then, there are only two variables in $\{\alpha_{i[p]}\}_{p=1}^N$ taking possibly nonzero values, say $\alpha_{i[p^k]},\alpha_{i[p^k+1]}$, which are determined by
\begin{equation}
\begin{split}
    \hx_i^{k}&=\alpha_{i[p^k]} \hx_{i[p^k]}+\alpha_{i[p^k+1]} \hx_{i[p^k+1]} \\
    1 &= \alpha_{i[p^k]} + \alpha_{i[p^k+1]}.
\end{split}
\end{equation}
With $\alpha_{i[p^k]},\alpha_{i[p^k+1]}$ as parameters, we have the following PWL approximation of \eqref{ObRO-sub} 
\begin{subequations}\label{ObRO-sub-PWL}
\begin{align}
     &(\{\hf^{k+1}_{i[p]}\}_{p=1}^N,~\hat{\mD}^{k+1}_i) = \notag \\
     &\arg\max~\sum\nolimits_{i=1}^n \hf_{i}(\hx_i) - \epsilon\hat{\mD}_i \\
      &\textup{over}~\{\hf_{i[p]}\}_{p=1}^N,\hat{\mD}_i,\{s_{i[p]}\}_{p=1}^N,~i=1,...,n   \notag \\
      &s.t.~|\hf_{i[p]} - \hf^0_{i[p]} | \leq \delta_i^{\max} \label{sub-PWL-delta} \\
      &~~~~~\hat{\mD}_i \leq d_i^{\max} \label{sub-PWL-d}  \\
      &~~~~~|\hf_{i[p]}-\hf_{i[p+1]}|\leq L_i|\hf^0_{i[p]}-\hf^0_{i[p+1]}|  \label{sub-PWL-lip} \\
      &~~~~~|\hf_{i[p]} - \hf^0_{i[p]}| \leq s_{i[p]} \label{sub-PWL-D1}\\
      &~~~~~\hat{\mD}_i  = \sum\nolimits_{p=1}^{N-1} \frac{1}{2}(s_{i[p+1]}+s_{i[p]}) (x_{i[p+1]}-x_{i[p]}) \label{sub-PWL-D2}  \\
      &~~~~~\hf_{i}(\hx_i) = \alpha_{i[p^k]} \hf_{i[p^k]} + \alpha_{i[p^k+1]} \hf_{i[p^k+1]}.   \label{sub-PWL-fi}
\end{align}
\end{subequations}
Note that in all constraints of \eqref{ObRO-sub-PWL}, the variable index $i$ ranges from 1 to $n$ and interval segment index $p$ ranges from 1 to $N$.
\eqref{sub-PWL-delta}-\eqref{sub-PWL-lip} are the PWL version of \eqref{ObRO-f0}-\eqref{ObRO-lip}, respectively.
In \eqref{sub-PWL-D1}-\eqref{sub-PWL-D2}, $\hat{\mD}_i$ is the trapezoidal-type numerical integration of $\mD_i$, where the auxiliary variables $\{s_{i[p]}\}_{p=1}^N$ are introduced to reformulate the absolute-value terms $|\hf_{i[p]} - \hf^0_{i[p]}|$ into linear ones.
\eqref{sub-PWL-fi} evaluates $\hf_{i}$ in the interval $[\hf_{i[p^k]},\hf_{i[p^k+1]}]$.

\indent
The approximate subproblem \eqref{ObRO-sub-PWL} is an LP, which returns $\{\hf^{k+1}_{i[p]}\}_{p=1}^N$ to construct the PWL approximation of the actual optimal function $f_i^{k+1}$.
Also the approximate total deviation of $\hf^{k+1}_i$ from $f_i^0$, say $\hat{\mD}_i$, is obtained as a byproduct, which can be directly used in the master problem to avoid duplicate computation.

\begin{figure}[!h]
  \centering
  \includegraphics[width=3.5in]{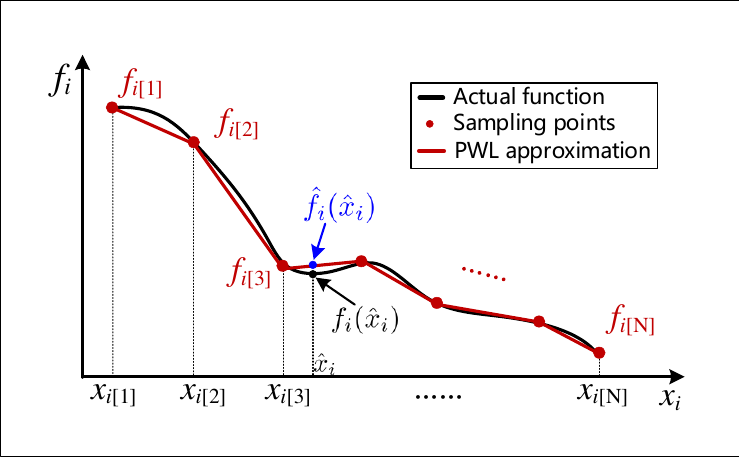}
  \caption{PWL approximation of a function.}
  \label{fig-PWL}
\end{figure}

\indent
With the obtained $\{\hf^{k+1}_{i[p]}\}_{p=1}^N$ and $\hat{\mD}^{k+1}_i$, we construct the PWL approximation of the master problem \eqref{ObRO-master} as follows
\begin{subequations}\label{ObRO-master-PWL}
\begin{align}
      &(\hat{\bm{x}}^{k+1},~\eta^{k+1}) = \notag \\
      &\arg\min~\eta \\
      &\textup{over}~\eta,\hat{\bm{x}},\{\beta_{i[p]}\}_{p=1}^{N-1},\{\alpha_{i[p]}\}_{p=1}^N,i=1,...,n   \notag \\
      &s.t.~\eta\geq \sum\nolimits_{i=1}^n \hf_{i}^l(\hx_i) - \epsilon\hat{\mD}_i^l,~l=0,...,k+1 \\
      &\bm{A}\hat{\bm{x}} \leq \bm{b}  \\
      &\eqref{fihat-beta}-\eqref{fihat-alpha},~i=1,...,n\\
      &\hx_i = \sum\nolimits_{p=1}^{N} \alpha_{i[p]} \hx_{i[p]},~i=1,...,n \\
      &\hf_{i}^l(\hx_i) = \sum_{p=1}^{N} \alpha_{i[p]} \hf^l_{i[p]},~i=1,...,n,~l=0,...,k+1
\end{align}
\end{subequations}
The approximate master problem \eqref{ObRO-master-PWL} is an MILP, which returns $\hat{\bm{x}}^{k+1}$ and $\eta^{k+1}$ as the updated optimal solution and corresponding lower bound $LB$ for the ObRO problem.

\indent
The following result establishes the numerical consistency between the approximate problems \eqref{ObRO-sub-PWL}-\eqref{ObRO-master-PWL} and original problems \eqref{ObRO-sub}-\eqref{ObRO-master}.

\begin{theorem}[Consistency of PWL approximation]\label{thm-consistency}
   Under Assumptions \ref{assump-unique}-\ref{assump-bounded}, there exists a partition of $[x_i^{\min},x_i^{\max}]$, say $\mI_N(x_i)$, with the sampling points $\{x_{i[p]}\}_{p=1}^N$, $i=1,...,n$ such that $\lim_{N\rightarrow\infty} \hat{\bm{x}}^k|_{\mI_N} = \bm{x}^k$ and $\lim_{N\rightarrow\infty} \bm{\hf}^k|_{\mI_N} = \bm{f}^k$, where $\hat{\bm{x}}^k|_{\mI_N}=\{\hx^k_i\}_{i=1}^n,\bm{\hf}^{k}|_{\mI_N}=\{\hf_i^{k}\}_{i=1}^n$, $k=0,1,...$ are the sequence of solutions generated by applying Algorithm~\ref{algObRO} to the approximate master problem \eqref{ObRO-master-PWL} and approximate subproblem \eqref{ObRO-sub-PWL}, and $\bm{x}^k,\bm{f}^k$, $k=0,1,...$ are the sequence of solutions generated by applying Algorithm~\ref{algObRO} to the master problem \eqref{ObRO-master} and subproblem \eqref{ObRO-sub}.
\end{theorem}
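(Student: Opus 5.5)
We argue by induction on the iteration index $k$, using a stability argument. Suppose the approximate iterates $\hat{\bm{x}}^k|_{\mI_N}$ and $\bm{\hf}^k|_{\mI_N}$ converge to $\bm{x}^k,\bm{f}^k$ as $N\rightarrow\infty$. We want to show that the approximate subproblem \eqref{ObRO-sub-PWL} and master problem \eqref{ObRO-master-PWL} then produce the $(k+1)$-th iterates converging to $\bm{f}^{k+1}=\mT_1\bm{x}^k$ and $\bm{x}^{k+1}=\mT_2\circ\mT_1\bm{x}^k$. The base case $k=0$ is the master problem with the single scenario $\bm{f}^0$, which is the same argument as the master step below. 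For the partition, choose a sequence $\mI_N$ whose mesh $h_N=\max_p(\hx_{i[p+1]}-\hx_{i[p]})$ tends to $0$, for instance the uniform one.

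\textbf{Step 1 (approximation of feasible sets).} Using Assumption \ref{assump-bounded}, we relate $\Nbmf0$ to its PWL version. In one direction, for any $f_i\in\Nf0$, the PWL interpolant $\Pi_N f_i$ satisfies the discrete constraints \eqref{sub-PWL-delta}--\eqref{sub-PWL-lip} exactly, since the nodal values are unchanged. Moreover, $|\hat{\mD}_i(\Pi_N f_i)-\mD_i(f_i)|\rightarrow 0$, because the trapezoidal rule converges for continuous integrands. In the other direction, any discrete feasible vector $\{\hf_{i[p]}\}$, read as a PWL function, satisfies \eqref{ObRO-f0} exactly. It satisfies \eqref{ObRO-lip} up to an error that vanishes as $h_N\rightarrow 0$, since the slopes of $f_i^0$ are bounded by $M$ and the Lipschitz-type ratio is controlled segmentwise. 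Finally, $\mD_i$ and $\hat{\mD}_i$ differ by $O(h_N)$, since the integrand is Lipschitz with a constant bounded via $L_iM$. The conclusion is that the discrete feasible sets converge to $\Nbmf0$ in the Painlevé--Kuratowski sense, uniformly in the sup norm. We may need a slight inflation of $d_i^{\max}$ by $O(h_N)$, or a Slater-type interior argument, to absorb the $\mD_i$ mismatch.

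\textbf{Step 2 (subproblem).} The approximate objective $\hf_i(\hx_i^k)-\epsilon\hat{\mD}_i$ converges to $V(\bm{f},\bm{x}^k)$ uniformly over equi-Lipschitz, bounded families. This uses the inductive hypothesis $\hx_i^k\rightarrow x_i^k$ together with the uniform Lipschitz bound from \eqref{ObRO-lip} and Assumption \ref{assump-bounded}. The constraints make the feasible functions equibounded and equicontinuous, so by Arzelà--Ascoli the PWL maximizers $\bm{\hf}^{k+1}|_{\mI_N}$ have uniformly convergent subsequences. Step 1 gives epi-convergence, so every limit point is a maximizer of \eqref{ObRO-sub}. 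By the uniqueness in Assumption \ref{assump-unique}, the whole sequence converges to $\bm{f}^{k+1}$. The penalties $\hat{\mD}_i^{k+1}$ converge as well.

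\textbf{Step 3 (master problem).} The scenario functions $\bm{\hf}^l$, $l\le k+1$, converge uniformly to $\bm{f}^l$. The MILP \eqref{ObRO-master-PWL} is exactly $\min_{\bm{x}\in\mS_x}\max_l\sum_i \hf_i^l(x_i)-\epsilon\hat{\mD}_i^l$, with $\hf_i^l$ the PWL interpolant evaluated at a continuous $x_i$. Its objective therefore converges uniformly on the compact set $\mS_x\cap\prod_i[x_i^{\min},x_i^{\max}]$, and $\mS_x$ is closed by Assumption \ref{assump-unique}. Standard stability of minimizers under uniform convergence, combined with uniqueness of the global optimum of \eqref{ObRO-master}, gives $\hat{\bm{x}}^{k+1}|_{\mI_N}\rightarrow\bm{x}^{k+1}$. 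This closes the induction.

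\textbf{Expected obstacle.} The delicate point is Step 1's reverse direction. Discrete feasibility does not exactly imply continuous feasibility: the Lipschitz ratio \eqref{ObRO-lip} is imposed only on adjacent nodes, and $\hat{\mD}_i$ is only an approximation of $\mD_i$. The limit of discrete maximizers must still be shown to lie in $\Nbmf0$ and to be optimal. I would handle this by showing that every uniform limit satisfies \eqref{ObRO-f0}--\eqref{ObRO-lip}, as these are closed conditions under uniform convergence once the nodes become dense. For the ratio condition, I would pass to the limit on node pairs and argue by density and continuity, treating separately the points where $f_i^0(x^a)=f_i^0(x^b)$.
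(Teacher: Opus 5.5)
There is a genuine gap at exactly the point you flagged, and density and continuity cannot repair it. Constraint \eqref{sub-PWL-lip} is imposed only on \emph{consecutive} nodes. Chaining it gives, for nodes $\hx_{i[p]}<\hx_{i[q]}$,
\[
|\hf_{i[p]}-\hf_{i[q]}|\le L_i\sum\nolimits_{r=p}^{q-1}|f_i^0(\hx_{i[r]})-f_i^0(\hx_{i[r+1]})|,
\]
that is, $L_i$ times the variation of $f_i^0$ between the two nodes. This equals $L_i|f_i^0(\hx_{i[p]})-f_i^0(\hx_{i[q]})|$ only if $f_i^0$ is monotone on that stretch. So there is nothing to ``pass to the limit on node pairs'': non-adjacent pairs are never constrained. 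Your claim that a discrete-feasible function satisfies \eqref{ObRO-lip} up to an error vanishing with $h_N$ is therefore false. In the limit the discrete constraint becomes the local condition $|f_i'|\le L_i|(f_i^0)'|$. By contrast, \eqref{ObRO-lip} is global: it forces $f_i=\phi\circ f_i^0$ for some $L_i$-Lipschitz $\phi$, and in particular $f_i(x^a)=f_i(x^b)$ whenever $f_i^0(x^a)=f_i^0(x^b)$. Hence for non-monotone $f_i^0$ the outer limit of the discrete feasible sets strictly contains $\mN(f_i^0)$. Step 1's Painlev\'e--Kuratowski claim fails, and with it Step 2's assertion that every limit point of PWL maximizers solves \eqref{ObRO-sub}.

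This is not a technicality; the statement itself fails without extra structure. Take $n=1$, $f^0(x)=x^2$ on $[-1,1]$, $\mS_x=\{1/2\}$, $L=2$, small $\delta$ and large $d^{\max}$. Every $f\in\mN(f^0)$ is even, so the unique exact worst case $f^1$ must lift $f$ near both $\pm1/2$. The PWL subproblem, however, can lift only near $+1/2$ and pay about half the penalty. Its optimal value therefore beats every discrete-feasible function near $f^1$ by a margin bounded below independently of $N$ (roughly $\epsilon$ times the one-sided penalty). Consequently $\bm{\hf}^1|_{\mI_N}\not\rightarrow\bm{f}^1$ for every partition sequence.

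The paper's proof does not escape this. It runs the same induction (initialization, subproblem, master, repeat), but instead of your compactness-plus-uniqueness argument it cites continuity of $\mT_1,\mT_2$ from Lemma~\ref{lem-conti}. It then simply asserts that \eqref{sub-PWL-delta}--\eqref{sub-PWL-lip} ``approach'' \eqref{ObRO-f0}--\eqref{ObRO-lip}, which is the same false step. Continuity of $\mT_1$ in $\bm{x}$ would not control a perturbed feasible set anyway.

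Your epi-convergence architecture is the sounder of the two, and it does close under an added hypothesis. If each $f_i^0$ is monotone (as the BESS reference curve is on $[0,P_{bi}^{\max}]$), the adjacent constraints telescope to all node pairs and your density argument works. Alternatively, impose \eqref{sub-PWL-lip} for all pairs $p<q$.

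There are also two smaller slips:
\begin{enumerate}
\item A discrete-feasible PWL function satisfies \eqref{ObRO-f0} only up to $\|\Pi_N f_i^0-f_i^0\|$, not exactly. This is harmless in the limit.
\item $\Pi_N f_i$ need not satisfy \eqref{sub-PWL-d}. Your Slater fix does work via $f_i^0+t(f_i-f_i^0)$ with $t<1$, which stays in $\mN(f_i^0)$ because that set is convex, contains $f_i^0$, and $L_i>1$.
\end{enumerate}
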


\begin{IEEEproof}
   It can be proved using the arbitrary precision of PWL approximation.
   The details are given in Appendix B.
\end{IEEEproof}

Theorem \ref{thm-consistency} ensures that the PWL technique provides appropriate finite-dimensional approximations for the original master problem \eqref{ObRO-master} and subproblem \eqref{ObRO-sub}.
Given a partition of $[x_i^{\min},x_i^{\max}]$ with sufficiently many segments, we can execute Algorithm~\ref{algObRO} on the approximate master problem \eqref{ObRO-master-PWL} and subproblem \eqref{ObRO-sub-PWL} to obtain quality numerical solutions that are sufficiently close to the actual solutions.

\begin{remark}
   The PWL technique provides a feasible numerical solver for ObRO with mathematical rigor.
   But it still may suffer from the dimension curse by directly solving the involved MILP problems in case of a dense segmentation of the interval $[x_i^{\min},x_i^{\max}]$.
   In practice, heuristics or machine learning methods \cite{dai2017learning,bengio2021machine} can be integrated into the MILP solver to speed up the solution process and reach a tradeoff between precision and efficiency.
\end{remark}

\section{Application in BESS charging scheduling}\label{seccase}
\subsection{Battery degradation model}
Battery lifetime degradation is governed by several factors.
For analytical tractability, we focus on the relationship between battery degradation and DoD, defined as the ratio of energy exchanged during a single charge or discharge event to the nominal battery capacity.
We adopt the battery aging reference curve below which is fitted by the least-square interpolation of experiment data~\cite{kong2024lithium}
\begin{equation}\label{batdegrade}
\begin{split}
    f_i^0(P_{bi}) = 9.62|P_{bi}|\Delta t/E_{bi}^{\max} - 4.7(|P_{bi}|\Delta t/E_{bi}^{\max})^2
\end{split}
\end{equation}
where $\Delta t, P_{bi}$ respectively denote the duration of timeslot and charging power during timeslot $t$, $E_{bi}^{\max}$ denotes the battery capacity, and DoD is given by the term $|P_{bi}|\Delta t/E_{bi}^{\max}$.

\subsection{Degradation-aware BESS charging scheduling}
Integrating the empirical degradation model \eqref{batdegrade}, we formulate the optimal BESS charging scheduling problem for a power distribution network as follows
\begin{subequations}\label{ObRO-bat}
\begin{align}
      \min_{P_{bi}^t}&\max_{f_i(\cdot)}~\sum_{i\in\mN}\sum_{t\in\mT}w_v|V_i^t-1|+\sum_{i\in\mN_{bat}}\sum_{t\in\mT} f_i(P_{bi}^t) \notag \\
      &~~~~~~~-\sum_{i\in\mN_{bat}}\epsilon\mD_i(f_i) \label{ObRO-bat-obj} \\
      s.t.~& V_i^t=V_{s}+\sum\nolimits_{j\in\mN}R_{ij}(P_{Wj}^t-P_{bj}^t-P_{Lj}^t) - X_{ij}Q_{Lj}^t, \notag\\
      &~~~~~~~~~~~~~~~~~~~~~~~~~~~~~~~~i\in\mN,t\in\mT   \label{ObRO-bat-pf} \\
      & V_i^{\min} \leq V_i^t \leq  V_i^{\max},~i\in\mN,t\in\mT  \label{ObRO-bat-V}\\
      & P_{bi}^t = 0,~i\in\mN\backslash\mN_{bat},t\in\mT \label{ObRO-bat-Pt}\\
      & P_{bi}^{\min} \leq  P_{bi}^t \leq P_{bi}^{\max},~i\in\mN_{bat},t\in\mT \label{ObRO-bat-Pb}\\
      & 0 \leq E_{bi}^0+\sum_{k=1}^t P_{bi}^k\Delta t \leq E_{bi}^{\max},~i\in\mN_{bat},t\in\mT \label{ObRO-bat-Eb} \\
      &\|f_i - f_i^0\| \leq \delta_i^{\max},~i\in\mN_{bat}  \label{ObRO-bat-fi1}\\
      &\mD_i(f_i) \leq d_i^{\max},~i\in\mN_{bat}  \\
      &\Big|\frac{f_i(x_i^{a})-f_i(x_i^{b})}{f_i^0(x_i^{a})-f_i^0(x_i^{b})}\Big| \leq L_i,\forall x_i^{a},x_i^{b}\in[P_{bi}^{\min},P_{bi}^{\max}] \label{ObRO-bat-fi3}
\end{align}
\end{subequations}
where $\mN,\mN_{bat},\mT$ denotes the set of all non-substation nodes in the network, set of nodes installing BESSs and set of timeslots of interest;
$P_{Wi}^t,P_{Li}^t,Q_{Li}^t$ denotes the renewable power generation, active load and reactive load at node $i$ during timeslot $t$ that are known parameters;
$V_{s}$ denotes the substation voltage that is a predefined constant;
$P_{bi}^t$ denotes the BESS power output at node $i$ during timeslot $t$ ($P_{bi}^t>0$/$P_{bi}^t<0$ means the BESS is getting charged from the network/discharging to the network);
$V_i^t$ denotes the voltage at node $i$ during timeslot $t$ that is determined by \eqref{ObRO-bat-pf}.
The objective \eqref{ObRO-bat-obj} is a weighted sum of voltage deviations from the rated value, battery degradation and small penalty terms $\mD_i$;
\eqref{ObRO-bat-pf} is the linear DistFlow equation where $R_{ij},X_{ij}$ are coefficients determined by the distribution network topology~\cite{farivar2013cdc};
\eqref{ObRO-bat-V} refers to the voltage limits;
\eqref{ObRO-bat-Pb}-\eqref{ObRO-bat-Eb} refer to the BESS power and state of charge limits;
\eqref{ObRO-bat-fi1}-\eqref{ObRO-bat-fi3} regulate the possible deviation of the actual battery degradation model from the reference one in \eqref{batdegrade}.
Overall, problem \eqref{ObRO-bat} coordinates the charging strategies of multiple batteries in the distribution network to minimize the total voltage deviation and battery degradation under the worst-case degradation curve for each battery.

\subsection{Numerical test}
The test is carried out on an 8-node distribution system shown in Fig. \ref{fig8node}, where nodes 3 and 5 install photovoltaic panels and nodes 2 and 6 install BESS devices.
We refer to \cite{case8data} for the detailed system information including the peak load, peak photovoltaic generation and network parameters.
In addition, we assume the load and photovoltaic generation at each node follow the daily curves in Fig. \ref{figloadcurve}.

\indent
To construct the model \eqref{ObRO-bat}, we adopt the following parameters: $\mN=\{1,...,8\}$, $\mT=\{1,...,24\}$ (hourly scheduling), $w_v=10$, $\epsilon=0.1$, $P_{bi}^{\max}=0.04$, $E_{bi}^{\max}=0.2$, $E_{bi}^{0}=0$, $V_i^{\min}=0.95$, $V_i^{\max}=1.05$, $\delta_i=0.05$, $d_i^{\max}=10^{-3}$, $L_i=1.5$, $\varepsilon=10^{-2}$ (stop criterion).
Under these settings, Algorithm \ref{algObRO} converges in 79 iterations and the trajectories of $UB$ and $LB$ are shown in Fig. \ref{figUBLB}.
The optimal charging strategies and worst-case degradation functions for BESSs at nodes 2 and 6 are depicted in Fig. \ref{figoptPb} and Fig. \ref{figoptfunc}, respectively.

\indent
As illustrated in Fig. \ref{figoptfunc}, the worst-case degradation function for BESS at node 2 (blue curve) takes larger values than the reference function (black curve) in both medium and high charging power intervals.
On the other hand, the worst-case function for BESS at node 6 (orange curve) takes larger values than the reference function only in low charging power intervals.
To reduce degradation in this worst case, in Fig. \ref{figoptPb} the BESS at node 2 has lower peak charging power than the initial solution (see the blue and pink curves), and the BESS at node 6 tends to take more extreme charging power than the initial solution (see the green and red curves).

\indent
The above test is carried out by evenly segmenting the interval $[0,P_{bi}^{\max}]$ into 20 segments ($\Delta P_{bi}=0.002$). To compare the solutions under different resolutions, we set up the following segmentation schemes.
\begin{itemize}
  \item Scheme 1 (sparse): even segment with $\Delta P_{bi}=0.004$ for the whole interval $[0,P_{bi}^{\max}]$;
  \item Scheme 2 (benchmark): even segment with $\Delta P_{bi}=0.002$ for the whole interval $[0,P_{bi}^{\max}]$;
  \item Scheme 3 (dense): even segment with $\Delta P_{bi}=0.0008$ for the whole interval $[0,P_{bi}^{\max}]$;
  \item Scheme 4 (heterogeneous): even segment with $\Delta P_{bi}=0.0008$ for $[0,P_{bi}^{\max}/2]$, and even segment with $\Delta P_{bi}=0.002$ for $[P_{bi}^{\max}/2,P_{bi}^{\max}]$;
  \item Scheme 5 (parametric uncertainty): optimization under the assumption that the degradation function takes the form below with uncertain parameters $a,b$
  \begin{equation}
  \begin{split}
      f_i^0(P_{bi}) = a|P_{bi}|\Delta t/E_{bi}^{\max} - b(|P_{bi}|\Delta t/E_{bi}^{\max})^2
  \end{split}
  \end{equation}
  where $9\leq a\leq 10$ and $4\leq b \leq 5$.
\end{itemize}

\indent
The worst-case degradation functions in the above five cases are depicted in Fig.~\ref{figoptfuncs2} (for BESS at node 2) and Fig.~\ref{figoptfuncs6} (for BESS at node 6).
The optimization under parametric uncertainty (Scheme 5) has few degrees of freedom, trivially driving $a,b$ to their boundary values ($a=10, b=4$) to overall elevate the function curve.
In contrast, the ObRO solutions manage to capture those local variations in the curve, which are generally consistent with each other.
The curves under Schemes 1-4 all indicate that the functional variation around $P_{bi}=0.02, 0.035$ (or $0.005<P_{bi}<0.015$) will worsen the performance of BESS at node 2 (or node 6).
Moreover, it can be observed from Schemes 1-3 that a denser segmentation leads to richer nuances in the function expression.
The similarity between Schemes 3 and 4 suggests that the computational complexity can be reduced by deploying dense segmentation for subintervals of critical interest and maintaining sparse segmentation elsewhere.

\begin{figure}[!h]
  \centering
  \subfigure[System diagram.]{
  \label{fig8node} 
  \includegraphics[width=2.0in]{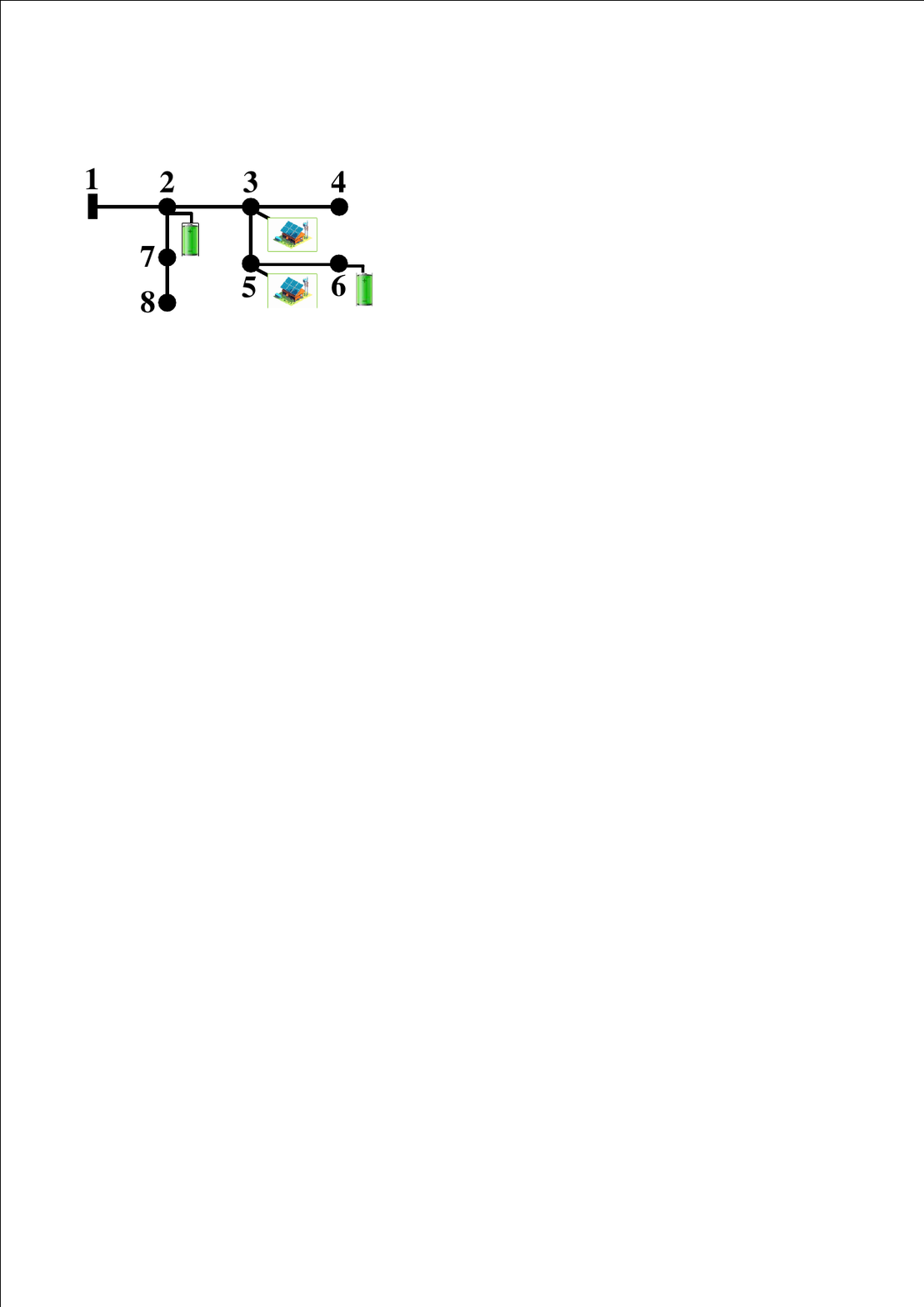}}
  \subfigure[Daily load and PV curves.]{
  \label{figloadcurve} 
  \includegraphics[width=2.7in]{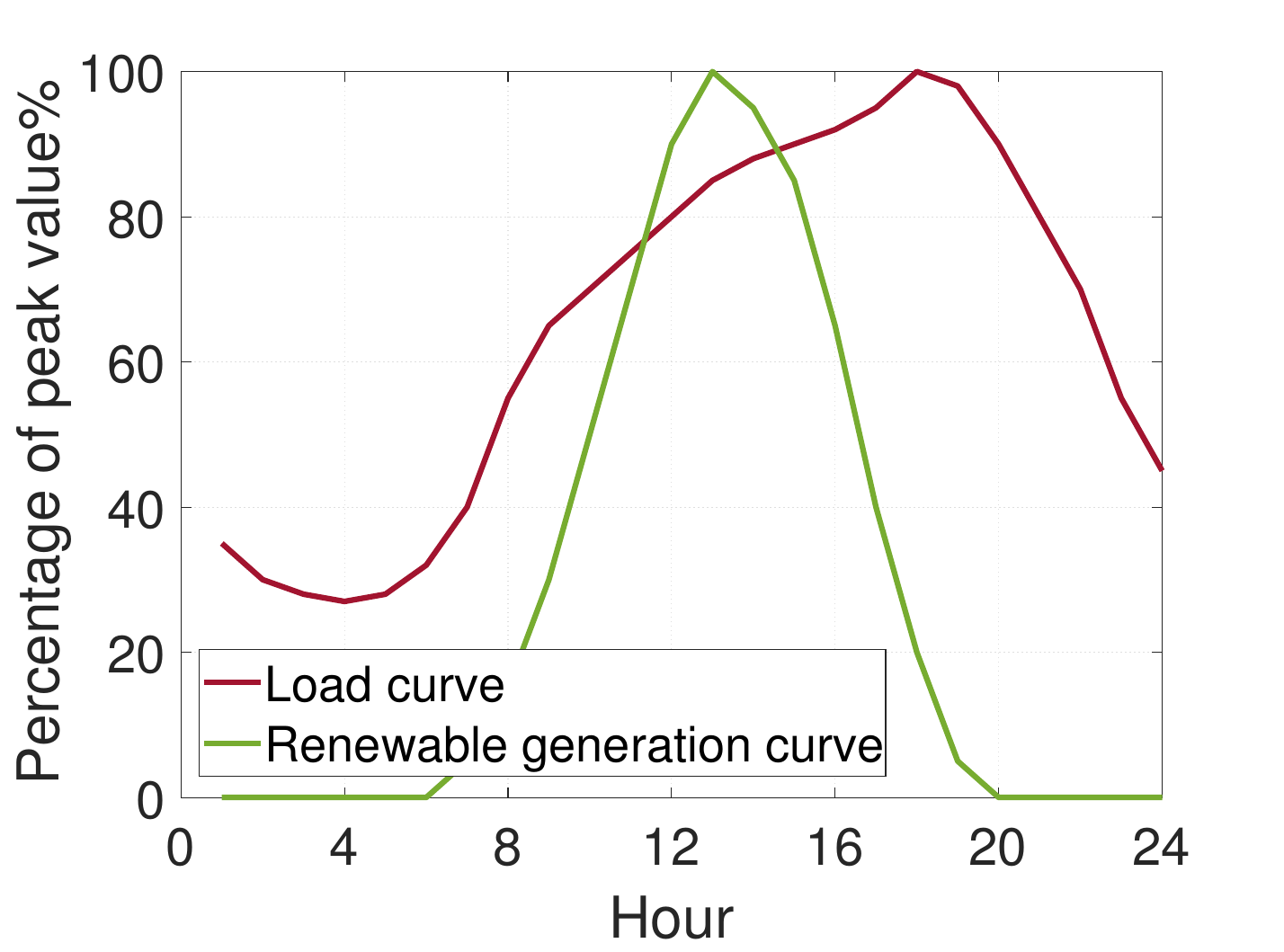}}
  \caption{Information of the 8-node test system.}
  \label{figsysinfo} 
\end{figure}

\begin{figure}[!h]
  \centering
  \subfigure[Iteration process.]{
  \label{figUBLB} 
  \includegraphics[width=2.7in]{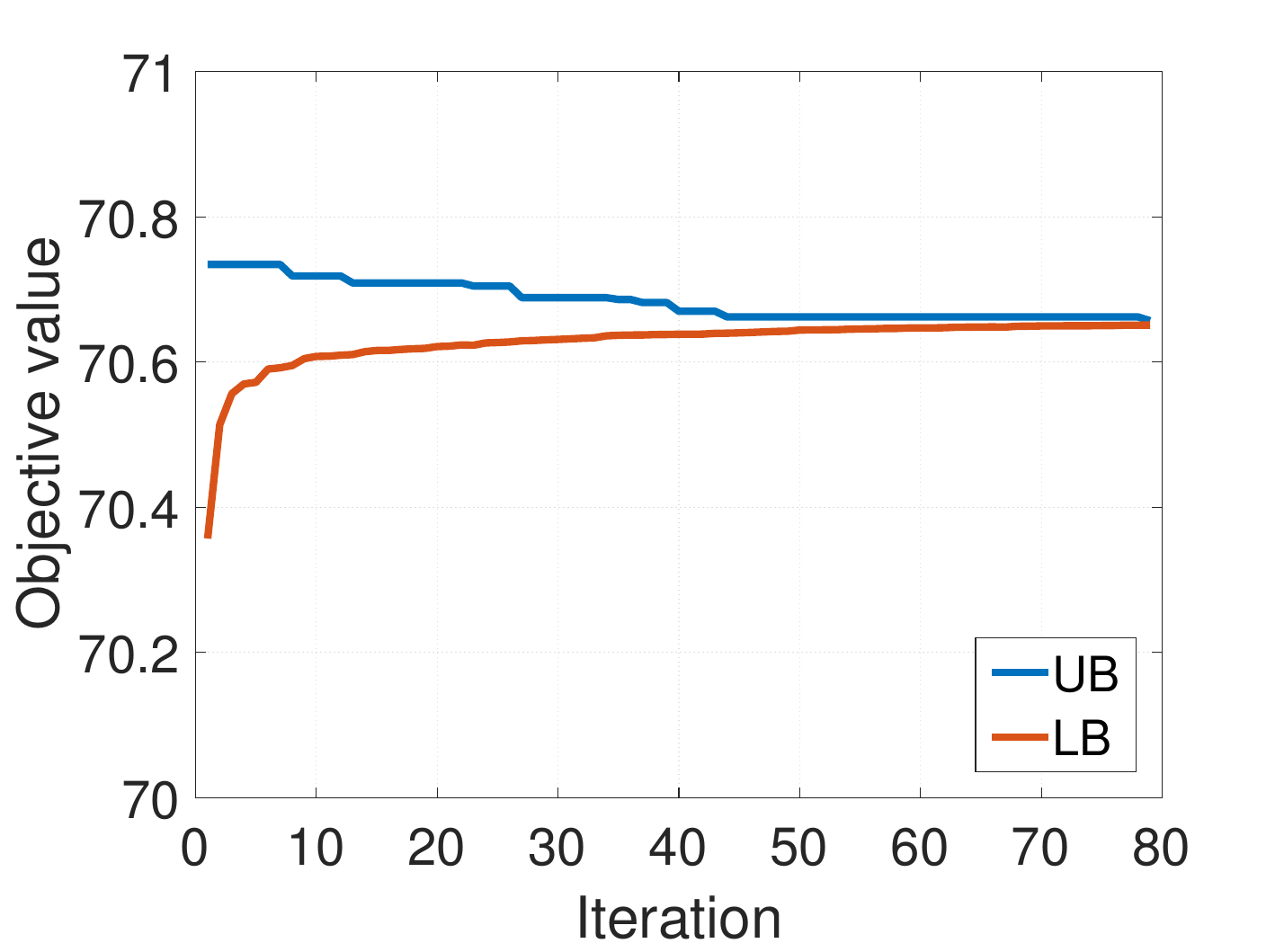}}
  \subfigure[Charging strategies comparison.]{
  \label{figoptPb} 
  \includegraphics[width=2.7in]{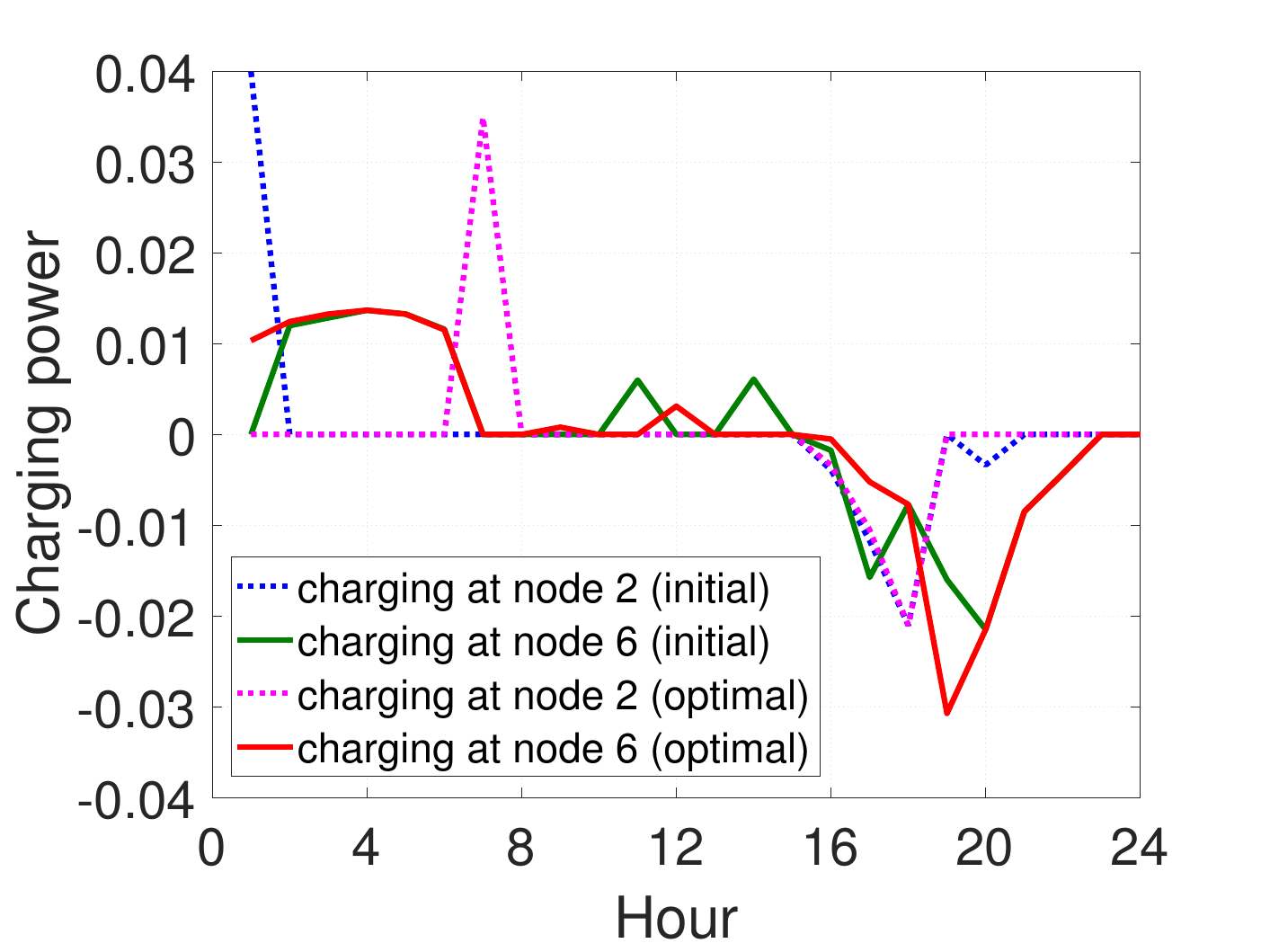}}
  \caption{The solution information}
  \label{figsol} 
\end{figure}

\begin{figure}[!h]
  \centering
  \includegraphics[width=3.5in]{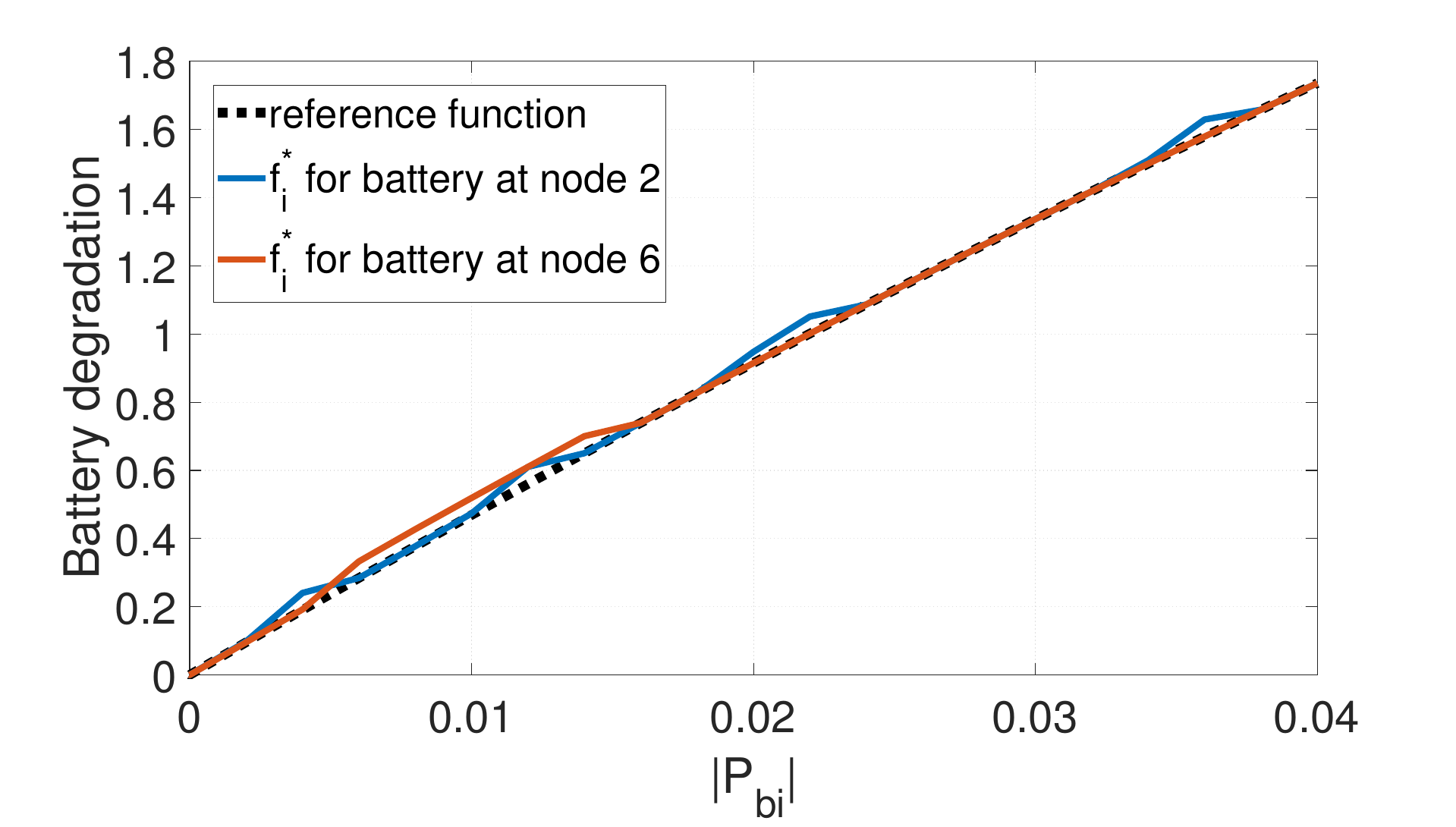}
  \caption{Reference degradation function v.s. worst-case functions.}
  \label{figoptfunc}
\end{figure}

\begin{figure}[!h]
  \centering
  \includegraphics[width=3.5in]{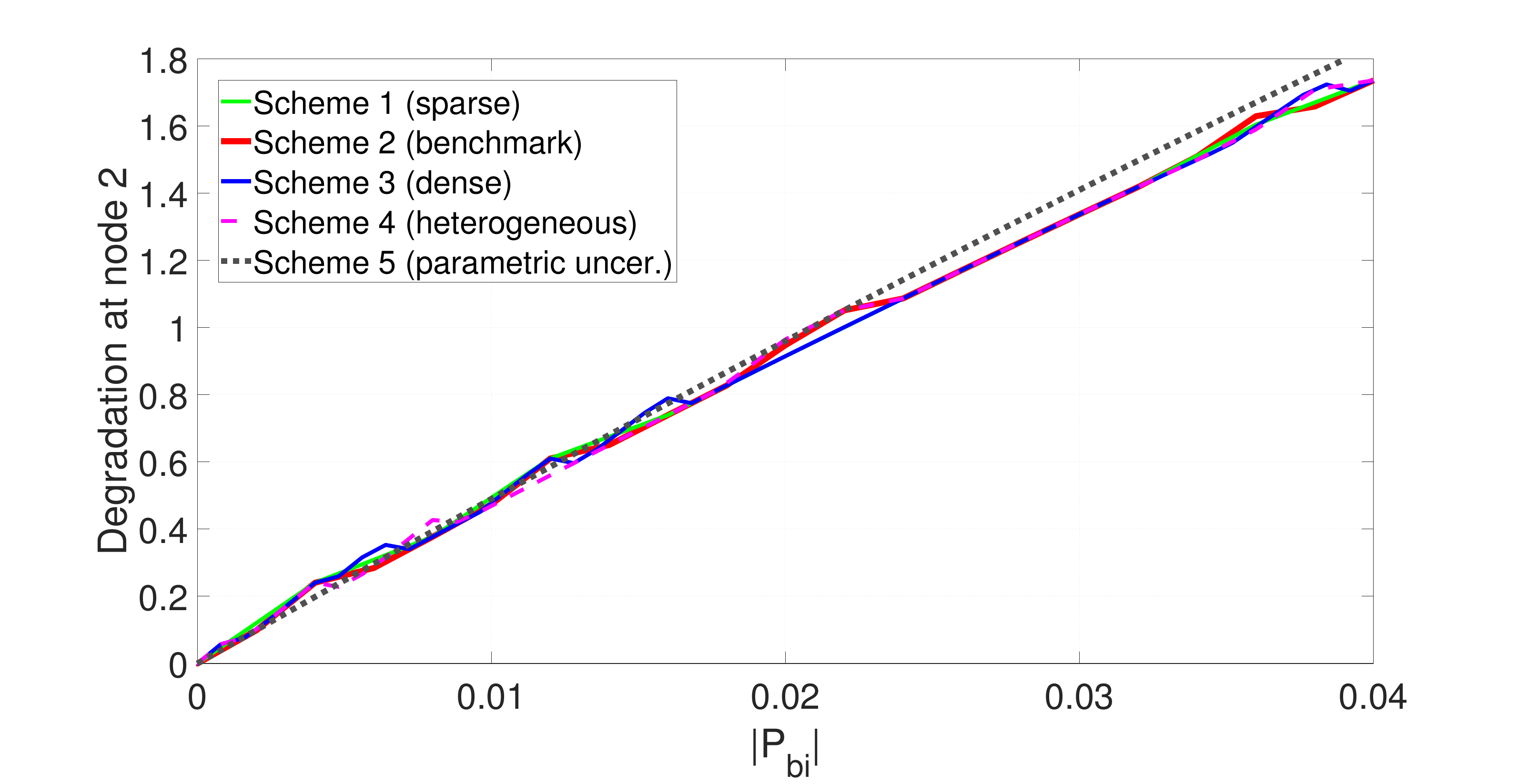}%
  \caption{Worst-case functions under five schemes for BESS at node 2.}
  \label{figoptfuncs2}
\end{figure}

\begin{figure}[!h]
  \centering
  \includegraphics[width=3.5in]{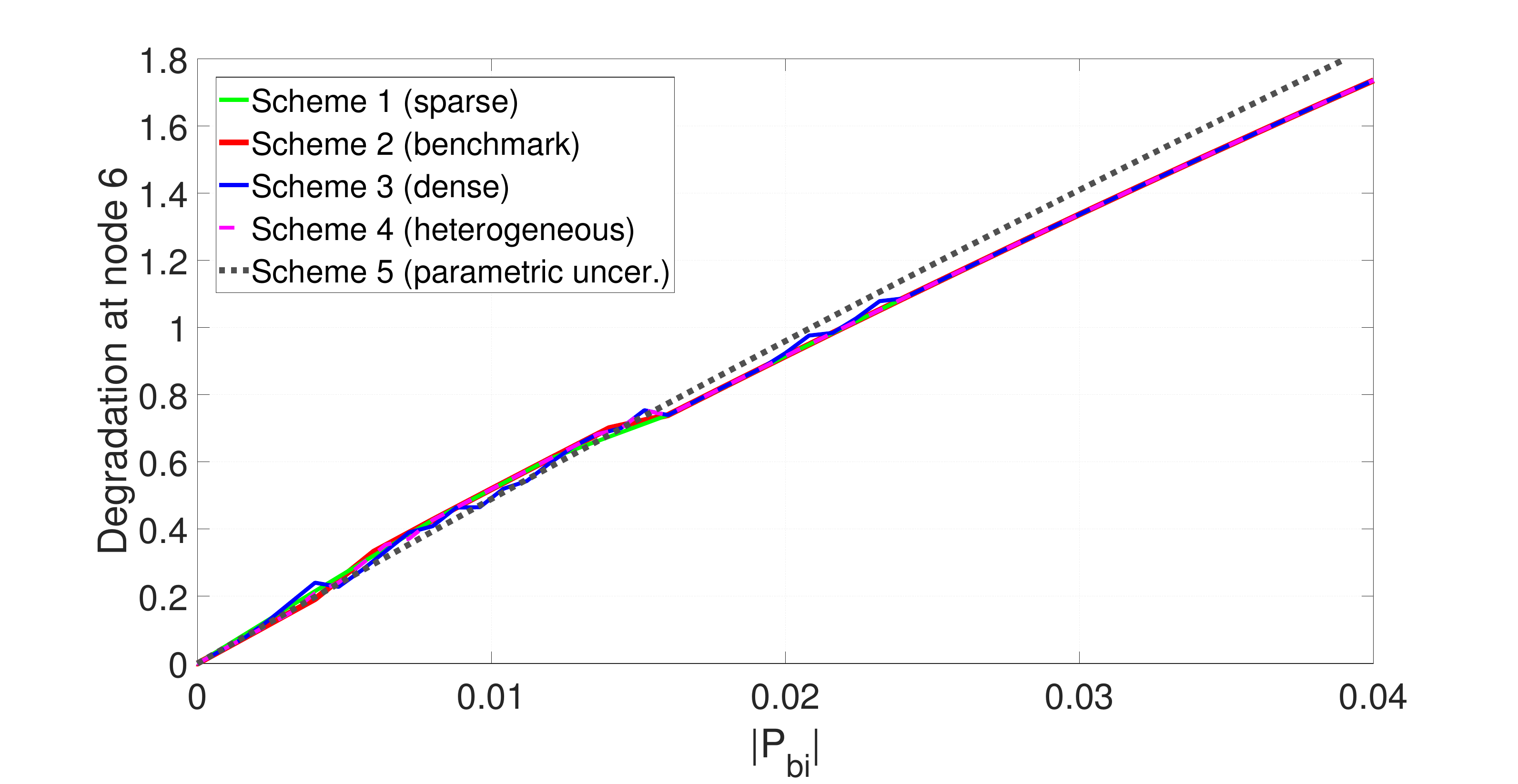}%
  \caption{Worst-case functions under five schemes for BESS at node 6.}
  \label{figoptfuncs6}
\end{figure}

\section{Conclusion}\label{secconclu}
We have formulated the ObRO problem that finds the decision with the minimum cost under the worst-case objective function.
We have designed an alternate iterative algorithm which converges to a semi-global saddle point of the ObRO problem.
A PWL-based numerical solver has also been proposed, which is consistent with the original ObRO problem.
The obtained results have been applied to the degradation-aware BESS charging scheduling in distribution networks.

\section*{Appendix A: Proof of Theorem \ref{thm-converge}}
Let us first present the following three lemmas as a basis.
\begin{lemma}\label{lem-conti}
    Under Assumptions \ref{assump-unique}-\ref{assump-bounded}, the operator $\mT_2\circ\mT_1$ is continuous and uniformly compact.
\end{lemma}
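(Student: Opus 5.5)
The plan is to prove the two properties separately. For each one I first handle $\mT_1$ and $\mT_2$, then pass to the composition.

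\emph{Uniform compactness.} The image of $\mT_2$ lies in $\mS_x\subseteq\prod_i[x_i^{\min},x_i^{\max}]$. This set is closed by Assumption~\ref{assump-unique} and bounded, so by Heine--Borel it is a compact subset of $\mbR^n$. It does not depend on the input. Hence $\mT_2\circ\mT_1$ maps every $\bm{x}$ into this fixed compact set, which is exactly uniform compactness.

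We also need compactness on the function side, and it is needed for the continuity argument. Let $f_i\in\Nf0$. By \eqref{ObRO-f0} and Assumption~\ref{assump-bounded}, $\|f_i\|\le M+\delta_i^{\max}$. By \eqref{ObRO-lip} and Assumption~\ref{assump-bounded},
\[
|f_i(x^a)-f_i(x^b)|\le L_i|f_i^0(x^a)-f_i^0(x^b)|\le L_iM|x^a-x^b|.
\]
So $\Nf0$ is uniformly bounded and equi-Lipschitz. The three defining inequalities are preserved under uniform limits: the sup-norm bound is obvious, $\mD_i$ is continuous in the sup norm, and the ratio bound holds pointwise. Therefore $\Nf0$ is closed. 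Arzel\`a--Ascoli then shows that $\Nbmf0$ is compact in $\prod_i C[x_i^{\min},x_i^{\max}]$.

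\emph{Continuity of $\mT_1$.} Let $\bm{x}^j\to\bm{x}$. The map $(\bm{f},\bm{x})\mapsto V(\bm{f},\bm{x})$ is jointly continuous on $\Nbmf0\times\mS_x$. This follows from the equi-Lipschitz bound together with
\[
|\mD_i(f)-\mD_i(g)|\le (x_i^{\max}-x_i^{\min})\|f-g\|.
\]
Set $\bm{f}^j=\mT_1\bm{x}^j$. By compactness of $\Nbmf0$, any subsequence has a further subsequence converging to some $\bar{\bm{f}}$. For every $\bm{f}\in\Nbmf0$ we have $V(\bm{f}^j,\bm{x}^j)\ge V(\bm{f},\bm{x}^j)$, and passing to the limit gives $V(\bar{\bm{f}},\bm{x})\ge V(\bm{f},\bm{x})$. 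Thus $\bar{\bm{f}}$ is a maximizer at $\bm{x}$. By the uniqueness in Assumption~\ref{assump-unique}, $\bar{\bm{f}}=\mT_1\bm{x}$. Since every subsequence has a further subsequence with this same limit, the whole sequence converges, and $\mT_1$ is continuous. This is the standard Berge maximum-theorem argument.

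\emph{Continuity of $\mT_2$.} I would run the same argument on the master problem. Its value function is $\max_l V(\bm{f}^l,\bm{x})$, which is continuous jointly in the finitely many scenario functions and in $\bm{x}$. The feasible set $\mS_x$ is compact and fixed. Uniqueness of the minimizer (Assumption~\ref{assump-unique}) then gives continuity of the argmin with respect to the scenarios, and in particular with respect to the newly added $\bm{f}^{k+1}$.

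\emph{Main obstacle.} The hard step is making $\mT_2$ well defined as a map whose only argument is the new function. The master problem also depends on the accumulated history $\bm{f}^0,\dots,\bm{f}^{k}$, so $\mT_2$ is not really a function of $\bm{f}^{k+1}$ alone. My first attempt is to treat the earlier scenarios as fixed parameters for the current iteration, and to prove continuity uniformly over any finite history drawn from the compact set $\Nbmf0$. Under that reading the composition $\mT_2\circ\mT_1$ is continuous, and the lemma follows.
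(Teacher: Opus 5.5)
Your proposal is correct and follows the paper's proof. Arzel\`a--Ascoli, with the same bounds ($\|f_i\|\le M+\delta_i^{\max}$, Lipschitz constant $L_iM$, closedness), gives compactness of $\mN(\bm{f}^0)$; closedness and boundedness of $\mS_x$ give uniform compactness of $\mT_2\circ\mT_1$; and uniqueness of the optima in Assumption~\ref{assump-unique} gives continuity of $\mT_1$ and $\mT_2$.

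Where the paper only asserts that uniqueness forces $\bm{f}^{y_j}\to\bm{f}^{k+1}$, your subsequence (Berge) argument supplies what that assertion actually needs: compactness of $\mN(\bm{f}^0)$ together with joint continuity of $V$. The history-dependence of $\mT_2$ that you flag is a real issue, but the paper handles it with the same implicit per-iteration reading. Under that reading your argument is complete, provided you claim continuity for each fixed finite history (which is what the Berge argument gives) rather than anything uniform across histories.
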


\begin{IEEEproof}
   We first establish the continuity of $\mT_1$ and $\mT_2$.
   Given $\bm{x}^{k}$, let $\bm{f}^{k+1}=\mT_1\bm{x}^k$.
   Consider a sequence $\{\bm{y}_j\}$ that $\lim_{j\rightarrow\infty} \bm{y}_j=\bm{x}^{k}$, and let $\bm{f}^{y_j}=\arg\max\nolimits_{\bm{f}\in\Nbmf0}V(\bm{f},\bm{y}_{j})$.
   Then, $\lim_{j\rightarrow\infty} f\bm{f}^{y_j}=\bm{f}^{k+1}$ must hold due to the uniqueness of optimal solution of the subproblem \eqref{ObRO-sub} under Assumption \ref{assump-unique}. It implies that $\mT_1$ is continuous.
   Following a similar idea, we can show that $\mT_2$ is also continuous and hence the composite operator $\mT_2\circ\mT_1$ is continuous.

   Next we show the uniform compactness of $\mT_2\circ\mT_1$.
   By Assumption \ref{assump-bounded} and constraint \eqref{fneighbor}, it can be easily proved that
    \begin{itemize}
      \item $\Nf0$ is uniformly bounded, i.e., there exists $M_i=M+\delta_i^{\max}$ such that $\| f_i \|\leq M_i$, $\forall f_i\in\Nf0$.
      \item $\Nf0$ is equicontinuous, i.e., $\forall \varepsilon>0$, there exists $\delta=\frac{\varepsilon}{L_i M}$ (independent of $f_i$) such that $\forall x_i^a,x_i^b\in[x_i^{\min},x_i^{\max}]$ with $|x_i^a - x_i^b|<\delta$ we have $|f_i(x_i^a)-f_i(x_i^b)| < L_i M |x_i^a - x_i^b|< \varepsilon$, $\forall f_i\in\Nf0$.
      \item $\Nf0$ is closed.
    \end{itemize}
    By Arzela-Ascoli Theorem \cite{rudin1991functional}, we conclude that $\Nf0$ is a compact set since it is uniformly bounded, equicontinuous and bounded.
    In addition, we conclude that the finite-dimensional set $\mS_x$ is also compact due to its closedness and boundness (it includes $x_i^{\min}\leq x_i\leq x_i^{\max}$).
    So, $\mT_2\circ\mT_1$ is a uniformly compact operator as there exist compact sets $\mS_x$ and $\Nbmf0$ such that $\mT_1(\bm{x})\in \Nbmf0$ and $\mT_2\circ\mT_1(\bm{x})\in \mS_x$, $\forall\bm{x}\in \mS_x$.
\end{IEEEproof}

\begin{lemma}\label{lem-monotonic}
    Under Assumption \ref{assump-unique}, the operator $\mT_2\circ\mT_1$ is strictly monotonic with respect to $UB-LB$ in Algorithm~\ref{algObRO}.
\end{lemma}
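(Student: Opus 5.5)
We must show: if $\bm{x}^k$ is not a fixed point of $\mT_2\circ\mT_1$, then the gap $UB-LB$ strictly decreases after one iteration. Regard the gap as a functional $g(\bm{x}^k)=UB^k-LB^k$, where $UB^k=\min_{l\le k}V(\bm{f}^{l+1},\bm{x}^l)$ and $LB^k=\eta^k$. The argument compares the quantities before and after applying $\mT_2\circ\mT_1$ and treats the upper and lower bounds separately.

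\textbf{Monotonicity of each bound.} $UB$ is nonincreasing by construction, since Step 4 of Algorithm~\ref{algObRO} takes a minimum. $LB$ is nondecreasing, since the master problem \eqref{ObRO-master} at iteration $k+1$ has the feasible set of iteration $k$ intersected with one more cut $\eta\ge V(\bm{f}^{k+1},\bm{x})$. Hence $\eta^{k+1}\ge\eta^k$, and therefore $g(\mT_2\circ\mT_1\bm{x}^k)\le g(\bm{x}^k)$ always holds.

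\textbf{Strictness.} Suppose $\bm{x}^{k+1}=\mT_2\circ\mT_1\bm{x}^k\neq\bm{x}^k$. I would show that $\eta^{k+1}>\eta^k$ by contradiction. Assume $\eta^{k+1}=\eta^k$. Then $(\bm{x}^{k+1},\eta^{k+1})$ is feasible for the iteration-$k$ master problem, because it satisfies the cuts $l=0,\dots,k$, and it attains the optimal value $\eta^k$. By uniqueness of the master optimum (Assumption~\ref{assump-unique}), $\bm{x}^{k+1}=\bm{x}^k$, contradicting $\bm{x}^{k+1}\neq\bm{x}^k$. Hence $LB$ strictly increases. Combined with $UB$ being nonincreasing, this gives $g(\bm{x}^{k+1})<g(\bm{x}^k)$.

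To make this airtight I would also record the supporting inequality $\eta^{k+1}\ge V(\bm{f}^{k+1},\bm{x}^{k+1})$. In the equality case it gives $V(\bm{f}^{k+1},\bm{x}^k)\le\eta^k\le UB$, so the new cut would be inactive at $\bm{x}^k$. This is consistent with $\bm{x}^k$ remaining optimal and confirms that equality of the lower bounds forces a fixed point.

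\textbf{Main obstacle.} The delicate point is that $UB-LB$ depends on the whole history (the accumulated cuts and the running minimum), not only on $\bm{x}^k$. So "strict monotonicity with respect to a functional $g$" in the sense of the definition preceding Assumption~\ref{assump-unique} must be interpreted along the algorithm's trajectory. I would handle this by defining the state as $\bm{x}^k$ together with its cut set, and noting that $\mT_2$ implicitly carries the cut history. The uniqueness argument above only uses the nested structure of the master feasible sets, so it goes through regardless of this interpretation. Should the cut history make $\bm{x}^{k+1}=\bm{x}^k$ possible without $\bm{x}^k$ being a fixed point, the same uniqueness shows that the iteration then stays at $\bm{x}^k$, which is by definition a fixed point. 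This closes the case analysis.
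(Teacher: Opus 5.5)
Your argument is correct and is essentially the paper's. $UB$ is trivially nonincreasing, and when $\bm{x}^{k+1}\neq\bm{x}^k$ the uniqueness of the master optimum forces $\eta^{k+1}>\eta^k$; this is exactly the paper's second case, which you phrase by contradiction and the paper phrases directly via $\max_{l\le k}V(\bm{f}^l,\bm{x})>\max_{l\le k}V(\bm{f}^l,\bm{x}^k)$ for $\bm{x}\neq\bm{x}^k$.

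The paper also treats $\bm{x}^{k+1}=\bm{x}^k$ with a genuinely new $\bm{f}^{k+1}$, showing $\eta^{k+1}=V(\bm{f}^{k+1},\bm{x}^k)>\max_{l\le k}V(\bm{f}^l,\bm{x}^k)=\eta^k$ via uniqueness of the subproblem maximizer. Under the literal definition on $\mbR^n$ that case is a fixed point and you may skip it. Under your own ``$\bm{x}^k$ plus cut set'' reading, however, the state does change there, so noting that ``the iteration stays at $\bm{x}^k$'' does not show the gap strictly decreases, and this subproblem-uniqueness step is what you would need.
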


\begin{IEEEproof}
   It trivially follows that $UB$ is non-increasing during the iteration in Algorithm~\ref{algObRO}. The subsequent proof demonstrates that $LB$ is strictly increasing.

   Suppose we have solution $\bm{x}^{k}$ at hand, which is obtained by solving the master problem below in the previous iteration
   \begin{subequations}\label{masterfk}
   \begin{align}
      (\bm{x}^{k},\eta^{k})=\arg\min_{\bm{x}\in\mS_x,\eta}~&\eta \\
      s.t.~&\eta\geq V(\bm{f}^l,\bm{x}),~l=0,...,k.
   \end{align}
   \end{subequations}
   with the following properties
   \begin{subequations}
   \begin{align}
      &\eta^{k}=\max\{V(\bm{f}^l,\bm{x}^{k}),l=0,1,...,k\} \label{etakm1}\\
      &\max\{V(\bm{f}^l,\bm{x}),l=0,1,...,k\}> \notag \\
      &~~~~~~\max\{V(\bm{f}^l,\bm{x}^{k}),l=0,1,...,k\},\forall \bm{x}\neq \bm{x}^{k}  \label{xkm1}
   \end{align}
   \end{subequations}
   due to the uniqueness of master problem optimum under Assumption~\ref{assump-unique}.

   Given $\bm{x}^{k}$, solving \eqref{ObRO-sub} gives $\bm{f}^{k+1}$.
   The uniqueness of subproblem optimum under Assumption~\ref{assump-unique} gives
   \begin{equation}\label{lastsub}
   \begin{split}
       V(\bm{f}^{k+1},\bm{x}^{k}) > V(\bm{f},\bm{x}^{k}),~\forall \bm{f}\neq \bm{f}^{k+1}.
   \end{split}
   \end{equation}
   Then, $(\bm{x}^{k+1},\eta^{k+1})$ is obtained by solving
   \begin{subequations}\label{masterfk}
   \begin{align}
      (\bm{x}^{k+1},\eta^{k+1})=\arg\min_{\bm{x}\in\mS_x,\eta}~&\eta \\
      s.t.~&\eta\geq V(\bm{f}^l,\bm{x}),~l=0,...,k \label{eta-fi} \\
      &\eta\geq V(\bm{f}^{k+1},\bm{x}). \label{eta-fk}
   \end{align}
   \end{subequations}
   If $\bm{f}^{k+1}=\bm{f}^l$, $l=0,...,k$, it directly implies $\bm{x}^{k+1}=\bm{x}^{k}$ and $\bm{x}^{k}$ is a fixed point of $\mT_2\circ\mT_1$, rendering the definition of strictly monotonic operators inapplicable.
   Thus, we assume $\bm{f}^{k+1}\neq \bm{f}^l$, $l=0,...,k$, and examine the following two distinct cases:
   \begin{enumerate}
     \item If $\bm{x}^{k+1}=\bm{x}^{k}$, then by \eqref{etakm1} and \eqref{lastsub} we have
     \begin{equation}
     \begin{split}
       \eta^{k+1}&=\max\{V(\bm{f}^l,\bm{x}^{k+1}),l=0,...,k+1\}\\
       &=\max\{V(\bm{f}^l,\bm{x}^{k}),l=0,...,k+1\}\\
       &=V(\bm{f}^{k+1},\bm{x}^{k})\\
       &>\max\{V(\bm{f}^l,\bm{x}^{k}),l=0,...,k\}=\eta^{k}.
     \end{split}
     \end{equation}
     \item If $\bm{x}^{k+1}\neq \bm{x}^{k}$, then \eqref{xkm1} and \eqref{lastsub} we have
     \begin{equation}
     \begin{split}
       \eta^{k+1}&=\max\{V(\bm{f}^l,\bm{x}^{k+1}),l=0,...,k+1\}\\
       &\geq \max\{V(\bm{f}^l,\bm{x}^{k+1}),l=0,...,k\}\\
       &>\max\{V(\bm{f}^l,\bm{x}^{k}),l=0,...,k\}=\eta^{k}.
     \end{split}
     \end{equation}
   \end{enumerate}
   It implies that $LB$ is strictly increasing in both cases and hence $\mT_2\circ\mT_1$ is strictly monotonic with respect to $UB-LB$.
\end{IEEEproof}

\begin{lemma}[\cite{meyer1976sufficient}]\label{lem-meyer}
    Let $\mT: Y_1\rightarrow Y_1$ be an operator that is continuous, uniformly compact, and strictly monotonic on $Y_1$ with respect to the functional $g$.
    If the number of fixed points of $\mT$ having any given value of $g$ is finite, the sequence $\{y_i\}_{i=1}^{\infty}$ generated by $y_{i+1}=\mT(y_i)$ converges to a fixed point of $\mT$.
\end{lemma}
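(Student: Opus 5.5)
This is Meyer's classical result on monotonic point-to-point maps, so I would prove it as in \cite{meyer1976sufficient}. The argument has four steps: build a compact orbit, extract limit points, show every limit point is fixed, and then use finiteness to conclude convergence.

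\textbf{Step 1 (compactness and limit points).} By uniform compactness there is a compact set $Y_1'\subseteq Y_1$ with $\mT(y)\in Y_1'$ for all $y$. Hence the orbit $\{y_i\}_{i\ge 2}$ lies in $Y_1'$. Let $\Omega$ be the set of its limit points. Then $\Omega$ is nonempty and compact, and if $\Omega$ is a single point the whole sequence converges to it.

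\textbf{Step 2 (the functional along the orbit).} Assume no $y_i$ is a fixed point; otherwise the sequence is eventually constant and we are done. Strict monotonicity gives $g(y_{i+1})<g(y_i)$ for every $i$. I would take $g$ continuous, as is implicit in Meyer's setting; in the application $g=UB-LB$ is built from continuous optimal values. Under this assumption $g$ is bounded below on the compact set $Y_1'$, so $g(y_i)\downarrow g^*$. Continuity then gives $g(z)=g^*$ for every $z\in\Omega$.

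\textbf{Step 3 (limit points are fixed).} Take $z\in\Omega$ with $y_{i_j}\to z$. Continuity of $\mT$ gives $y_{i_j+1}\to\mT z$, so $\mT z\in\Omega$ and therefore $g(\mT z)=g^*=g(z)$. If $z$ were not fixed, strict monotonicity would force $g(\mT z)<g(z)$, a contradiction. So every point of $\Omega$ is a fixed point with $g$-value $g^*$. The finiteness hypothesis then makes $\Omega$ finite.

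\textbf{Step 4 (from finitely many limit points to convergence).} This is the main obstacle, because a finite limit set does not by itself give convergence. I would first show $\|y_{i+1}-y_i\|\to 0$. If not, some subsequence would satisfy $\|y_{i_j+1}-y_{i_j}\|\ge c>0$. Passing to a further subsequence, $y_{i_j}\to z\in\Omega$, and then $y_{i_j+1}\to\mT z=z$, which is a contradiction.

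Now suppose $\Omega$ had at least two points. Separate them by disjoint neighborhoods at distance at least $3r>0$. Eventually every $y_i$ lies within $r$ of $\Omega$; otherwise compactness would produce a limit point outside $\Omega$. Also eventually the step size satisfies $\|y_{i+1}-y_i\|<r$. So the tail of the sequence cannot jump from one neighborhood to another, yet it must visit each of them infinitely often. This is a contradiction, so $\Omega$ is a singleton and $y_i$ converges to a fixed point of $\mT$.

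In the metric (normed) setting used here, which is $\mbR^n$ for $\mT_2\circ\mT_1$, these distance arguments are legitimate.
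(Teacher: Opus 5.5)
The paper gives no argument of its own here; it imports the lemma from Meyer. Your four steps are a correct, self-contained reconstruction of Meyer's proof:
\begin{itemize}
\item the tail of the orbit lies in the compact set $Y_1'$;
\item $g(y_i)\downarrow g^*$ forces $g\equiv g^*$ on the limit set $\Omega$;
\item continuity of $\mT$ gives $\mT(\Omega)\subseteq\Omega$, so strict monotonicity makes every point of $\Omega$ a fixed point with value $g^*$;
\item $\|y_{i+1}-y_i\|\to 0$ together with finiteness of $\Omega$ excludes two or more limit points (the Ostrowski-type connectedness argument).
\end{itemize}

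What your version buys over the bare citation is that it exposes two hypotheses the paper's restatement drops. First, your subsequence and step-size arguments use sequential compactness and a norm. Restricting to a metric setting is therefore appropriate, as Meyer does and as the application in $\mbR^n$ allows.

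Second, and more importantly, continuity of $g$ is not a convenience but is indispensable. On $\mbR^2$ let $\mT(y)=R_\theta P(y)$, with $P$ the projection onto the closed unit disk and $R_\theta$ the rotation by an irrational multiple of $2\pi$. This $\mT$ is continuous, maps into the unit disk (so it is uniformly compact), and has $0$ as its only fixed point. Choose one representative $y_O$ of each $R_\theta$-orbit in $\{0<\|y\|\le 1\}$, set $g(R_\theta^k y_O)=-k$, and set $g(y)=g(\mT y)+1$ for $\|y\|>1$. Then $\mT$ is strictly monotonic with respect to this (non-measurable) $g$, yet $y_i=R_\theta^{i-1}(1,0)$ does not converge. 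So the hypothesis you add is exactly what makes the stated lemma true.

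One caution about your aside: the claim that $g=UB-LB$ is continuous in the application is not needed for the lemma and is not supported by the paper. In Algorithm 1, both $UB-LB$ and the master problem defining $\mT_2\circ\mT_1$ depend on the accumulated scenarios $\bm{f}^0,\dots,\bm{f}^k$, not on $\bm{x}^k$ alone. Checking that $g$ is a well-defined continuous functional on $\mbR^n$ is therefore precisely what an application of this lemma would have to establish; it does not hold automatically.
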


\textit{Proof of Theorem \ref{thm-converge}:}
   It can be concluded by combining Lemmas~\ref{lem-conti}-\ref{lem-meyer}.  \hfill $\blacksquare$

\section*{Appendix B: Proof of Theorem \ref{thm-consistency}}
\begin{lemma}[\cite{lin1992canonical}]\label{lem-PWL}
    Given a function $f_i\in C[x_i^{\min},x_i^{\max}]$ and any $\varepsilon>0$, there exists an integer $N>0$ and partition $\mI_N(x_i)$ such that $\hf_{i}$ given by \eqref{fihat} satisfies $\|\hf_{i} - f_i\|<\varepsilon$.
\end{lemma}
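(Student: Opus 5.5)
The statement to prove is Lemma~\ref{lem-PWL}: every $f_i\in C[x_i^{\min},x_i^{\max}]$ is uniformly approximated, to any accuracy $\varepsilon$, by the linear interpolant $\hf_i$ at the nodes of a suitable partition. I would prove it directly from uniform continuity, without going through the canonical-representation machinery of \cite{lin1992canonical}.

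\emph{Step 1: uniform continuity.} Since $[x_i^{\min},x_i^{\max}]$ is compact and $f_i$ is continuous, $f_i$ is uniformly continuous there. So for the given $\varepsilon>0$ there is $\delta>0$ such that $|f_i(x^a)-f_i(x^b)|<\varepsilon/2$ whenever $|x^a-x^b|\le\delta$.

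\emph{Step 2: choice of partition.} Take $N$ with $(x_i^{\min}-x_i^{\max})$ in absolute value, i.e. $(x_i^{\max}-x_i^{\min})/(N-1)\le\delta$. Let $\mI_N(x_i)$ be the even partition with nodes $\hx_{i[p]}=x_i^{\min}+(p-1)(x_i^{\max}-x_i^{\min})/(N-1)$. Set $\hf_{i[p]}=f_i(\hx_{i[p]})$.

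\emph{Step 3: identify what \eqref{fihat} computes.} Fix $x\in[x_i^{\min},x_i^{\max}]$ and let $p$ be a segment with $x\in[\hx_{i[p]},\hx_{i[p+1]}]$. Constraints \eqref{fihat-beta}--\eqref{fihat-alpha} force $\beta_{i[p]}=1$ and $\alpha_{i[q]}=0$ for $q\notin\{p,p+1\}$. Then \eqref{fihat-xi} together with $\alpha_{i[p]}+\alpha_{i[p+1]}=1$ determines $\alpha_{i[p]}$ and $\alpha_{i[p+1]}$ uniquely as convex weights. Hence \eqref{fihat-fi} gives
$$\hf_i(x)=\alpha_{i[p]}f_i(\hx_{i[p]})+\alpha_{i[p+1]}f_i(\hx_{i[p+1]}).$$
At a node, the two adjacent segments return the same value $f_i(\hx_{i[p]})$, so $\hf_i$ is well defined and continuous.

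\emph{Step 4: the estimate.} Using $\alpha_{i[p]}+\alpha_{i[p+1]}=1$,
$$|\hf_i(x)-f_i(x)|\le\alpha_{i[p]}|f_i(\hx_{i[p]})-f_i(x)|+\alpha_{i[p+1]}|f_i(\hx_{i[p+1]})-f_i(x)|.$$
Both $|x-\hx_{i[p]}|$ and $|x-\hx_{i[p+1]}|$ are at most $\delta$, so each term on the right is below $\varepsilon/2$. Therefore $|\hf_i(x)-f_i(x)|<\varepsilon/2$ for every $x$. Because $[x_i^{\min},x_i^{\max}]$ is compact and $\hf_i-f_i$ is continuous, the maximum is attained, which gives $\|\hf_i-f_i\|\le\varepsilon/2<\varepsilon$.

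\emph{Main obstacle.} The analysis itself is routine. The one point needing care is Step 3: checking that the mixed-integer encoding \eqref{fihat} really yields the interpolant, and that the value does not depend on which admissible $\beta$ is chosen when $x$ lies on a node. I would settle this by solving the two-variable linear system for $\alpha$ explicitly.

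As a remark, under the Lipschitz-type bound of Assumption~\ref{assump-bounded} one could take $\delta$ explicitly proportional to $\varepsilon/M$. This uniformity over all $f_i\in\Nf0$, with modulus $L_iM$, is what later lets the lemma apply to all candidate functions simultaneously in the proof of Theorem~\ref{thm-consistency}.
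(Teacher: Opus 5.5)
Your proof is correct, but it takes a different route from the paper. The paper gives no argument of its own: it simply imports the lemma from the canonical piecewise-linear approximation result of \cite{lin1992canonical}. You instead give the elementary, self-contained argument: uniform continuity on a compact interval, a partition of mesh at most $\delta$, and the convex-combination estimate for the nodal interpolant. You also check that the mixed-integer encoding \eqref{fihat} actually returns that interpolant, and that the value at a node does not depend on which adjacent $\beta_{i[p]}$ is switched on. The citation leaves this identification implicit.

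Your version also proves more than the lemma states. Every partition with mesh at most $\delta$ works, not merely some partition, so $\|\hf_i-f_i\|\rightarrow 0$ along any refinement sequence whose mesh tends to zero. Moreover, the Lipschitz modulus $L_iM$ coming from Assumption~\ref{assump-bounded} and \eqref{ObRO-lip} makes this uniform over all $f_i\in\mathcal{N}(f_i^0)$. That is exactly the form used in the proof of Theorem~\ref{thm-consistency}. There the paper reaches $\lim_{N\rightarrow\infty}\bm{\hf}^0|_{\mI_N}=\bm{f}^0$ through an appeal to Cauchy sequences and completeness, which your mesh-based version makes unnecessary. The citation buys only brevity and access to a more general canonical-representation framework.

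Two cosmetic points:
\begin{enumerate}
\item The first sentence of your Step 2 is garbled. You mean: choose $N\ge 2$ with $(x_i^{\max}-x_i^{\min})/(N-1)\le\delta$.
\item In Step 3, the constraints force $\beta_{i[p]}=1$ only when $x$ lies in the interior of the $p$-th segment. At a node either adjacent segment is admissible, and you handle that case correctly right afterwards.
\end{enumerate}
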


\textit{Proof of Theorem \ref{thm-consistency}:}
   First, we will show that
   \begin{subequations}
   \begin{align}
      \lim\nolimits_{N\rightarrow\infty} \hat{\bm{x}}^0|_{\mI_N} &= \bm{x}^0 \label{consis-x0} \\
      \lim\nolimits_{N\rightarrow\infty} \bm{\hf}^1|_{\mI_N} &= \bm{f}^1   \label{consis-f1}  \\
      \lim\nolimits_{N\rightarrow\infty} \hat{\bm{x}}^1|_{\mI_N} &= \bm{x}^1  \label{consis-x1}
   \end{align}
   \end{subequations}
   which refers to the initialization step and first iteration in Algorithm~\ref{algObRO}.

   \indent
   During the initialization phase, we obtain $\hat{\bm{x}}^0$ by solving \eqref{ObRO-master-PWL} with the approximate reference function $\bm{\hf}^0$ (i.e., the PWL version of $\bm{f}^0$ at the sampling points $\{x_{i[p]}\}_{p=1}^N$).
   Meanwhile, we obtain $\bm{x}^0$ by solving \eqref{ObRO-master} with the actual reference function $\bm{f}^0$.
   By Lemma \ref{lem-PWL} and completeness of the function space $C[x_i^{\min},x_i^{\max}]$, there exists a partition $\mI_N(x_i)$ such that the sequence $\bm{\hf}^0|_{\mI_N}$, $N\in[N_1,\infty)$ forms a Cauchy sequence and
   \begin{equation}\label{inf-x0}
   \begin{split}
      \lim\nolimits_{N\rightarrow\infty} \bm{\hf}^0|_{\mI_N} = \bm{f}^0.
   \end{split}
   \end{equation}
   In addition, Lemma \ref{lem-conti} already implies that the operators $\mT_1,\mT_2$ are continuous.
   Then, \eqref{consis-x0} follows from the relation $\hat{\bm{x}}^0|_{\mI_N}=\mT_2(\bm{\hf}^0|_{\mI_N})$, $\bm{x}^0=\mT_2(\bm{f}^0)$ and continuity of operator $\mT_2$.

   \indent
   Consider the approximate subproblem \eqref{ObRO-sub-PWL} in the first iteration in Algorithm~\ref{algObRO}.
   Similar to the derivation of \eqref{inf-x0}, we conclude that constraints \eqref{sub-PWL-delta}-\eqref{sub-PWL-lip} approach \eqref{ObRO-f0}-\eqref{ObRO-lip} when $N\rightarrow\infty$.
   Together with \eqref{consis-x0} and the continuity of operator $\mT_1$, it leads to the satisfaction of \eqref{consis-f1}.
   Furthermore, \eqref{consis-x1} can be proved in a similar way using \eqref{consis-f1}, \eqref{inf-x0} and continuity of operator $\mT_2$.

   \indent
   By iteratively applying \eqref{consis-x0}-\eqref{consis-x1} alongside the established continuity of $\mT_1,\mT_2$, we can similarly prove the consistency between the solutions of the approximate and original problems across all subsequent iterations of Algorithm~\ref{algObRO}. \hfill $\blacksquare$

\ifCLASSOPTIONcaptionsoff
  \newpage
\fi

{\footnotesize
\bibliographystyle{IEEEtran}
\bibliography{IEEEabrv,objuncertain}

\begin{thebibliography}{10}
\providecommand{\url}[1]{#1}
\csname url@samestyle\endcsname
\providecommand{\newblock}{\relax}
\providecommand{\bibinfo}[2]{#2}
\providecommand{\BIBentrySTDinterwordspacing}{\spaceskip=0pt\relax}
\providecommand{\BIBentryALTinterwordstretchfactor}{4}
\providecommand{\BIBentryALTinterwordspacing}{\spaceskip=\fontdimen2\font plus
\BIBentryALTinterwordstretchfactor\fontdimen3\font minus
  \fontdimen4\font\relax}
\providecommand{\BIBforeignlanguage}[2]{{%
\expandafter\ifx\csname l@#1\endcsname\relax
\typeout{** WARNING: IEEEtran.bst: No hyphenation pattern has been}%
\typeout{** loaded for the language `#1'. Using the pattern for}%
\typeout{** the default language instead.}%
\else
\language=\csname l@#1\endcsname
\fi
#2}}
\providecommand{\BIBdecl}{\relax}
\BIBdecl

\bibitem{ben2009robust}
A.~Ben-Tal, L.~El~Ghaoui, and A.~Nemirovski, \emph{Robust Optimization}.\hskip
  1em plus 0.5em minus 0.4em\relax Princeton University Press, 2009.

\bibitem{drnach2021robust}
L.~Drnach and Y.~Zhao, ``Robust trajectory optimization over uncertain terrain
  with stochastic complementarity,'' \emph{IEEE Robotics and Automation
  Letters}, vol.~6, no.~2, pp. 1168--1175, 2021.

\bibitem{ben2011robust}
A.~Ben-Tal, B.~Do~Chung, S.~R. Mandala, and T.~Yao, ``Robust optimization for
  emergency logistics planning: Risk mitigation in humanitarian relief supply
  chains,'' \emph{Transportation Research Part {B}: Methodological}, vol.~45,
  no.~8, pp. 1177--1189, 2011.

\bibitem{sun2021robust}
X.~A. Sun and A.~J. Conejo, \emph{Robust Optimization in Electric Energy
  Systems}.\hskip 1em plus 0.5em minus 0.4em\relax Springer, 2021, vol. 313.

\bibitem{sinha2018certifying}
A.~Sinha, H.~Namkoong, and J.~Duchi, ``Certifying some distributional
  robustness with principled adversarial training,'' in \emph{International
  Conference on Learning Representations}, 2018.

\bibitem{zeng2013solving}
B.~Zeng and L.~Zhao, ``Solving two-stage robust optimization problems using a
  column-and-constraint generation method,'' \emph{Operations Research
  Letters}, vol.~41, no.~5, pp. 457--461, 2013.

\bibitem{delage2010distributionally}
E.~Delage and Y.~Ye, ``Distributionally robust optimization under moment
  uncertainty with application to data-driven problems,'' \emph{Operations
  Research}, vol.~58, no.~3, pp. 595--612, 2010.

\bibitem{zhao2018data}
C.~Zhao and Y.~Guan, ``Data-driven risk-averse stochastic optimization with
  wasserstein metric,'' \emph{Operations Research Letters}, vol.~46, no.~2, pp.
  262--267, 2018.

\bibitem{rahimian2022frameworks}
H.~Rahimian and S.~Mehrotra, ``Frameworks and results in distributionally
  robust optimization,'' \emph{Open Journal of Mathematical Optimization},
  vol.~3, pp. 1--85, 2022.

\bibitem{kuhn2025distributionally}
D.~Kuhn, S.~Shafiee, and W.~Wiesemann, ``Distributionally robust
  optimization,'' \emph{Acta Numerica}, vol.~34, pp. 579--804, 2025.

\bibitem{maccheroni2002maxmin}
F.~Maccheroni, ``Maxmin under risk,'' \emph{Economic Theory}, vol.~19, no.~4,
  pp. 823--831, 2002.

\bibitem{armbruster2015decision}
B.~Armbruster and E.~Delage, ``Decision making under uncertainty when
  preference information is incomplete,'' \emph{Management Science}, vol.~61,
  no.~1, pp. 111--128, 2015.

\bibitem{hu2017optimization}
J.~Hu and G.~Stepanyan, ``Optimization with reference-based robust preference
  constraints,'' \emph{{SIAM} Journal on Optimization}, vol.~27, no.~4, pp.
  2230--2257, 2017.

\bibitem{haskell2022preference}
W.~B. Haskell, H.~Xu, and W.~Huang, ``Preference robust optimization for choice
  functions on the space of cdfs,'' \emph{{SIAM} Journal on Optimization},
  vol.~32, no.~2, pp. 1446--1470, 2022.

\bibitem{ju2018two}
C.~Ju, P.~Wang, L.~Goel, and Y.~Xu, ``A two-layer energy management system for
  microgrids with hybrid energy storage considering degradation costs,''
  \emph{{IEEE} Trans. Smart Grid}, vol.~9, no.~6, pp. 6047--6057, 2018.

\bibitem{kong2024lithium}
L.~Kong, S.~Fang, T.~Niu, G.~Chen, L.~Yang, and R.~Liao, ``A lithium-ion
  battery capacity degradation correction method for off-design cycling
  conditions,'' \emph{{IEEE} Trans. Ind. Appl.}, vol.~60, no.~4, pp.
  5778--5789, 2024.

\bibitem{wang2026model}
G.~Wang, H.~Shu, B.~Shi, J.~Li, H.~Liu, and S.~Lei, ``Model-data hybrid driven
  {SOC} estimation for energy storage batteries using {AFF-RLS and
  BiLSTM-Transformer},'' \emph{Protection and Control of Modern Power Systems},
  vol.~11, no.~2, pp. 89--106, 2026.

\bibitem{terkelsen1972some}
F.~Terkelsen, ``Some minimax theorems,'' \emph{Mathematica Scandinavica},
  vol.~31, no.~2, pp. 405--413, 1972.

\bibitem{dempe2015bilevel}
S.~Dempe, V.~Kalashnikov, G.~A. P{\'e}rez-Vald{\'e}s, and N.~Kalashnykova,
  \emph{Bilevel Programming Problems: Theory, Algorithms and Applications to
  Energy Networks}.\hskip 1em plus 0.5em minus 0.4em\relax Springer, 2015.

\bibitem{simchi2019constraint}
D.~Simchi-Levi, H.~Wang, and Y.~Wei, ``Constraint generation for two-stage
  robust network flow problems,'' \emph{{INFORMS} Journal on Optimization},
  vol.~1, no.~1, pp. 49--70, 2019.

\bibitem{song2025adaptive}
Y.~Song, T.~Liu, and G.~Li, ``Adaptive robust optimal control of constrained
  continuous-time linear systems: A functional constraint generation
  approach,'' \emph{{IEEE} Trans. Autom. Control}, vol.~70, no.~2, pp.
  1312--1319, 2025.

\bibitem{lorca2016multistage}
{\'A}.~Lorca, X.~A. Sun, E.~Litvinov, and T.~Zheng, ``Multistage adaptive
  robust optimization for the unit commitment problem,'' \emph{Operations
  Research}, vol.~64, no.~1, pp. 32--51, 2016.

\bibitem{meyer1976sufficient}
R.~R. Meyer, ``Sufficient conditions for the convergence of monotonic
  mathematical programming algorithms,'' \emph{Journal of Computer and System
  Sciences}, vol.~12, no.~1, pp. 108--121, 1976.

\bibitem{gan2015exact}
L.~Gan, N.~Li, U.~Topcu, and S.~H. Low, ``Exact convex relaxation of optimal
  power flow in radial networks,'' \emph{{IEEE} Trans. Autom. Control},
  vol.~60, no.~1, pp. 72--87, 2015.

\bibitem{kocuk2016strong}
B.~Kocuk, S.~S. Dey, and X.~A. Sun, ``Strong {SOCP} relaxations for the optimal
  power flow problem,'' \emph{Operations Research}, vol.~64, no.~6, pp.
  1177--1196, 2016.

\bibitem{fooladivanda2018energy}
D.~Fooladivanda and J.~A. Taylor, ``Energy-optimal pump scheduling and water
  flow,'' \emph{{IEEE} Trans. Control Netw. Syst.}, vol.~5, no.~3, pp.
  1016--1026, 2018.

\bibitem{zheng2019online}
Y.~Zheng, Y.~Song, D.~J. Hill, and K.~Meng, ``Online distributed {MPC}-based
  optimal scheduling for {EV} charging stations in distribution systems,''
  \emph{{IEEE} Trans. Ind. Informat.}, vol.~15, no.~2, pp. 638--649, 2019.

\bibitem{singh2019natural}
M.~K. Singh and V.~Kekatos, ``Natural gas flow equations: Uniqueness and an
  {MI-SOCP} solver,'' in \emph{Proc. Amer. Control Conf.}, 2019, pp.
  2114--2120.

\bibitem{dai2017learning}
H.~Dai, E.~B. Khalil, Y.~Zhang, B.~Dilkina, and L.~Song, ``Learning
  combinatorial optimization algorithms over graphs,'' in \emph{Proc. NeurIPS},
  2017, pp. 6351--6361.

\bibitem{bengio2021machine}
Y.~Bengio, A.~Lodi, and A.~Prouvost, ``Machine learning for combinatorial
  optimization: A methodological tour d'horizon,'' \emph{European Journal of
  Operational Research}, vol. 290, no.~2, pp. 405--421, 2021.

\bibitem{farivar2013cdc}
M.~Farivar, L.~Chen, and S.~Low, ``Equilibrium and dynamics of local voltage
  control in distribution systems,'' in \emph{Proc. {IEEE} Conf. Dec. Control},
  2013, pp. 4329--4334.

\bibitem{case8data}
\BIBentryALTinterwordspacing
``case8adn.m.'' [Online]. Available:
  \url{https://drive.google.com/file/d/1cyqHKITynbqEjdwFYNWN1APS6a0VQWMe/view?usp=sharing}
\BIBentrySTDinterwordspacing

\bibitem{rudin1991functional}
W.~Rudin, \emph{Functional Analysis}.\hskip 1em plus 0.5em minus 0.4em\relax
  McGraw-Hill, 1991.

\bibitem{lin1992canonical}
J.-N. Lin and R.~Unbehauen, ``Canonical piecewise-linear approximations,''
  \emph{{IEEE} Trans. Circuits Syst. {I}}, vol.~39, no.~8, pp. 697--699, 1992.

\end{thebibliography}

}




\end{document}